\newtheorem{theorem}{Theorem}[section]
\newtheorem{lemma}[theorem]{Lemma}
\newtheorem{proposition}[theorem]{Proposition}
\newtheorem{corollary}[theorem]{Corollary}
\theoremstyle{definition}
\newtheorem{definition}[theorem]{Definition}
\newtheorem*{example}{Example}
\DeclareMathOperator{\Sp}{Sp}
\DeclareMathOperator{\im}{Im}
\DeclareMathOperator{\coim}{Coim}
\DeclareMathOperator{\Der}{Der}
\begin{document}
\title[Extending meromorphic connections]{Extending meromorphic connections to coadmissible $\wideparen{\mathcal{D}}$-modules}
\author{Thomas Bitoun}
\address{Department of Mathematics and Statistics, University of Calgary, 2500 University Drive NW, Calgary, AB, Canada, T2N 1N4}
\email{thomas.bitoun@ucalgary.ca}
\author{Andreas Bode}
\address{\'Ecole normale sup\'erieure de Lyon, site Monod, UMPA, 46 all\'ee d'Italie, 69364 Lyon, France}
\email{andreas.bode@ens-lyon.fr}
\subjclass[2010]{14G22 (primary), 14F10 (secondary), 12H25 (secondary).}
\maketitle
\begin{abstract}
We investigate when a meromorphic connection on a smooth rigid analytic variety $X$ gives rise to a coadmissible $\wideparen{\mathcal{D}}_X$-module, and show that this is always the case when the roots of the corresponding $b$-functions are all of positive type. \\
We also use this theory to give an example of an integrable connection on the punctured unit disk whose pushforward is not a coadmissible module.
\end{abstract}

\section{Introduction}
Let $K$ be a complete nonarchimedean field with non-trivial valuation, of characteristic zero. The study of $\wideparen{\mathcal{D}}$-modules on rigid analytic $K$-varieties was initiated by Ardakov--Wadsley in \cite{Ardakov1}, \cite{Ardakov2}, see also \cite{Ardakovequiv}, \cite{Bode} for further results. In those papers, the notion of \emph{coadmissibility} is investigated as the natural analogue of coherence in the classical theory. \\
\\
Developing a notion of holonomicity turns out to be more subtle: there is currently no satisfactory theory of characteristic varieties in the rigid analytic setting, and modules satisfying the familiar condition of the vanishing of certain Ext groups (called `weakly holonomic' modules in Ardakov's Oberwolfach Report \cite{Oberwolfach}) still might display some undesirable properties (infinite length, a direct image or inverse image which is not even coadmissible). A study of weakly holonomic $\wideparen{\mathcal{D}}$-modules, their behaviour under operations and some pathologies is presented in \cite{Dcapthree} by Ardakov, Wadsley and the second author.\\
\\
In this paper, we show that the notion of weakly holonomic $\wideparen{\mathcal{D}}$-module cannot be refined to give a more suitable category without losing some integrable connections: we present an example of an integrable connection on the punctured unit disk whose direct image is not coadmissible.\\
We thus answer a question of Ardakov in \cite{Oberwolfach} in the negative. 
\begin{theorem}
\label{introlambda}
Let $X=\Sp K\langle x\rangle$, $j: U\to X$ the embedding of $U=X\setminus\{0\}$, and write $\partial=\frac{\mathrm{d}}{\mathrm{d}x}$. Let $\mathcal{M}_{\lambda}=\wideparen{\mathcal{D}}_U/\wideparen{\mathcal{D}}_U(\lambda-x\partial)=\mathcal{O}_Ux^{\lambda}$ for some $\lambda \in K$ of type zero. \\
Then $j_*\mathcal{M}_{\lambda}$ is not a coadmissible $\wideparen{\mathcal{D}}_X$-module.
\end{theorem}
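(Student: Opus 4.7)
Since $X$ is affinoid, the theorem reduces to showing that $M := \Gamma(X, j_*\mathcal{M}_\lambda) = \mathcal{O}(U)\cdot x^\lambda$ is not a coadmissible $\wideparen{\mathcal{D}(X)}$-module. I plan to suppose for contradiction that it is and derive a contradiction from the type-zero hypothesis. Fix the standard Fréchet--Stein presentation $\wideparen{\mathcal{D}(X)} = \varprojlim_n D_n$ in which $\partial$ has norm $r_n^{-1}$ for a null sequence $r_n \to 0^+$ in $|K^\times|$; then coadmissibility furnishes $M = \varprojlim_n M_n$ with each $M_n = D_n \widehat{\otimes}_{\wideparen{\mathcal{D}(X)}} M$ a finitely generated Banach $D_n$-module.

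The first step is to identify $M_n$ with the natural Banach completion $\mathcal{O}(A_{r_n})\cdot x^\lambda$, where $A_{r_n} = \{r_n \leq |x| \leq 1\}$. One matches the canonical topology coming from the hypothetical coadmissible structure with the natural Fréchet topology $\mathcal{O}(U) x^\lambda = \varprojlim_s \mathcal{O}(A_s) x^\lambda$, exploiting uniqueness of the Fréchet topology on coadmissible modules together with the observation that on $\mathcal{O}(A_{r_n}) x^\lambda$ the twisted derivation $\partial + \lambda/x$ has operator norm $\asymp r_n^{-1}$, matching $\|\partial\|_{D_n}$.

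The second step is to exhibit the failure of finite generation. For $P = \sum b_{ij} x^i \partial^j \in D_n$, the identity $\partial^j\cdot x^\lambda = (\lambda)_j\, x^{\lambda-j}$ with $(\lambda)_j := \lambda(\lambda-1)\cdots(\lambda-j+1)$ gives $P\cdot x^\lambda = \sum_k c_{-k}\, x^{\lambda-k}$ where $c_{-k} = \sum_{i\geq 0} b_{i,\,i+k}\,(\lambda)_{i+k}$. The non-archimedean triangle inequality then forces $\|P\|_{D_n} \geq |c_{-k}|/(r_n^k\,|(\lambda)_k|)$ for every $k$, so the cyclic submodule $D_n\cdot x^\lambda$ consists only of those $f\cdot x^\lambda \in M_n$ whose negative coefficients satisfy the strengthened decay $|c_{-k}|/(r_n^k\,|(\lambda)_k|)\to 0$, beyond the standard decay $|c_{-k}|/r_n^k \to 0$ defining $M_n$. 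Under type zero there is a subsequence $k_j \to \infty$ along which $|(\lambda)_{k_j}|^{1/k_j}\to 0$; choosing $c_{-k_j} = r_n^{k_j}\,|(\lambda)_{k_j}|^{1/2}$ (and negligible coefficients elsewhere) produces $f\cdot x^\lambda \in M_n \setminus (D_n\cdot x^\lambda)$. The same mechanism rules out any finite generating set $\{g_\ell\cdot x^\lambda\}$: each $D_n\cdot (g_\ell\cdot x^\lambda)$ is controlled by shifted Pochhammers $(\lambda - K)_k$ obtained by iteratively applying the twisted derivation, and the type-zero property is preserved under integer shifts $\lambda \mapsto \lambda - K$.

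The main obstacle is the first step: ruling out that a putative coadmissible topology on $M$ could disagree with its natural Fréchet topology from $\mathcal{O}(U) x^\lambda$, so that the $M_n$ really can be identified with the explicit annulus sections. Once that identification is secured, the type-zero condition supplies the clean Pochhammer obstruction against finite generation; a secondary but routine difficulty is upgrading the monomial obstruction to an arbitrary finite family of generators using translation invariance.
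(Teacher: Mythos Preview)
Your core computational idea is right and matches the paper's: the Pochhammer bound $\|P\|_{D_n}\geq |c_{-k}|/(r_n^k|(\lambda)_k|)$ is valid (it follows from $|(\lambda)_{i+k}|\leq |(\lambda)_k|$ and $r_n^{i+k}\leq r_n^k$), and it encodes exactly the obstruction the paper exploits. But the scaffolding you put around it has two genuine gaps.

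\textbf{Step 1 does not follow from what you invoke.} You correctly note (and the paper proves as Lemma~\ref{equivtop}) that if $M$ is coadmissible over $\wideparen{\mathcal{D}}(X)$ then its canonical $\wideparen{\mathcal{D}}(X)$-topology and its canonical $\wideparen{\mathcal{D}}(U)$-topology coincide. But equivalence of two Fr\'echet topologies only says the two families of seminorms are mutually cofinal; it does \emph{not} identify the level-$n$ Banach completions. So you cannot conclude $M_n=\mathcal{D}_n(X)\otimes_{\wideparen{\mathcal{D}}(X)}M\cong \mathcal{O}(A_{r_n})x^\lambda$. The observation that $\partial$ has operator norm $r_n^{-1}$ on $\mathcal{O}(A_{r_n})x^\lambda$ shows there is a continuous $\mathcal{D}_n(X)$-linear map $M_n\to \mathcal{O}(A_{r_n})x^\lambda$, but neither injectivity nor surjectivity is clear. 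The paper never attempts this identification; instead it works entirely at the Fr\'echet level with $\wideparen{\mathcal{D}}(X)$ acting on $M=\mathcal{O}(U)x^\lambda$, which we know explicitly regardless of any coadmissibility hypothesis.

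\textbf{Step 3 is harder than you suggest, and the paper bypasses it.} For a general generator $g_\ell\cdot x^\lambda$ with $g_\ell\in\mathcal{O}(A_{r_n})$ a genuine Laurent series, the coefficient of $x^{\lambda-k}$ in $P\cdot(g_\ell x^\lambda)$ is $\sum_{m+i-j=-k} b_{ij}a_m^{(\ell)}(\lambda+m)_j$, and there is no clean control by a single shifted Pochhammer $(\lambda-K)_k$; your ``translation invariance'' remark only applies when $g_\ell$ is a monomial. The paper sidesteps this entirely: it first shows that coadmissibility forces $M$ to be \emph{cyclic} on $x^\lambda$ over $\wideparen{\mathcal{D}}(X)$ (the submodule $\wideparen{\mathcal{D}}(X)\cdot x^\lambda$ is dense because it contains $N_\lambda$, and closed because finitely generated submodules of coadmissible modules are coadmissible, hence closed). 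With a single generator fixed, the paper then reduces to constant-coefficient operators (your Pochhammer bound in disguise) and builds one element $m=\sum_r \pi^{(2i_r-j_r)r}x^{-i_r}x^\lambda\in \mathcal{O}(U)x^\lambda$, diagonalising over \emph{all} levels at once, so that the unique constant-coefficient preimage would satisfy $|g_{i_r}|>1$ for infinitely many $r$ and hence lie in no $\mathcal{D}_n(X)$.

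In short: keep your Pochhammer inequality, but drop the attempt to pin down $M_n$ and the attempt to handle arbitrary finite generating sets. Replace them with the paper's two reductions---cyclicity on $x^\lambda$, then constant coefficients---and construct a single obstruction element in $\mathcal{O}(U)x^\lambda$ directly.
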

We will discuss the notion of \emph{type}, taken from \cite[Definition 13.1.1]{Kedlaya}, in section 3 of this paper, and also give an explicit example of a type zero number (for which we thank Konstantin Ardakov and Arthur-C\'esar Le Bras).\\
We note that the literature usually highlights differences between scalars of type 1 and those of type less than 1 ($p$-adic Liouville numbers), whereas in our situation the special role of type zero numbers (which one might think of as `extremely Liouville' numbers) is owed to the specific convergence conditions in $\wideparen{\mathcal{D}}$.\\
\\
Throughout the paper we adopt the following, more general framework. Let $X=\Sp A$ be a smooth affinoid $K$-variety with free tangent sheaf and let 
\begin{equation*}
Z=\{f=0\}\subset X
\end{equation*} 
be a hypersurface. Let $\mathcal{O}_X(*Z)$ denote the sheaf of meromorphic functions with singularities along $Z$, so that e.g. $\mathcal{O}_X(*Z)(X)=A[f^{-1}]$. We consider a meromorphic connection $\mathcal{N}$ on $X$ with singularities along $Z$, by which we mean a $\mathcal{O}_X(*Z)\otimes_{\mathcal{O}_X}\mathcal{D}_X$-module which is coherent over $\mathcal{O}_X(*Z)$. Writing again $j: U\to X$ for the embedding of the complement of $Z$, we can view the restriction $\mathcal{M}=\mathcal{N}|_U$ as an integrable connection on $U$, and ask under which conditions $j_*\mathcal{M}$ is a coadmissible $\wideparen{\mathcal{D}}_X$-module. \\
Considering global sections, we have a $\mathcal{D}(X)[f^{-1}]$-module $N=\mathcal{N}(X)$ which is finitely generated over $A[f^{-1}]$, and we are studying the $\wideparen{\mathcal{D}}(X)$-module $\mathcal{M}(U)=\wideparen{\mathcal{D}}(U)\otimes_{\mathcal{D}(X)[f^{-1}]} N$.\\
The theory of $b$-functions (see \cite{Mebkhout}) ensures that $N$ is finitely presented over $\mathcal{D}(X)$, so that certainly $\wideparen{N}=\wideparen{\mathcal{D}}(X)\otimes_{\mathcal{D}(X)} N$ is always coadmissible. \\
Viewing these two tensor products as suitable completions of $N$, we give several equivalent conditions relating the coadmissibility of $j_*\mathcal{M}$ to properties of the canonical morphism 
\begin{equation*}
\wideparen{N}=\wideparen{\mathcal{D}}(X)\otimes_{\mathcal{D}(X)} N\to \wideparen{\mathcal{D}}(U)\otimes_{\mathcal{D}(X)[f^{-1}]} N=\mathcal{M}(U).
\end{equation*}
\\
One sufficient condition is clearly that the two completions are actually isomorphic, and this turns out to be always the case if all the roots of the corresponding $b$-functions are of positive type (see Theorem \ref{sheafextn}).
\begin{theorem}
\label{intropostype}
Let $m_1, \dots, m_k$ be a finite generating set of $\mathcal{N}(X)$ viewed as an $A[f^{-1}]$-module, and let $b_1, \dots, b_k$ denote the corresponding $b$-functions from \cite[Th\'eor\`eme 3.1.1]{Mebkhout}. If all roots of $b_i$ (in an algebraic closure of $K$) are of positive type for each $i$, then the natural morphism
\begin{equation*}
\wideparen{\mathcal{D}}(X)\otimes_{\mathcal{D}(X)} \mathcal{N}(X)\to \mathcal{M}(U)
\end{equation*}
is an isomorphism, and $j_*\mathcal{M}$ is a coadmissible $\wideparen{\mathcal{D}}_X$-module.
\end{theorem}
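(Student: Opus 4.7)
My plan is to reduce the theorem to a global statement about the affinoid $X$ and then exploit the $b$-function relations to compute explicitly in completions. Write $N = \mathcal{N}(X)$. I will first argue that the canonical map
\begin{equation*}
\Phi \colon \wideparen{\mathcal{D}}(X) \otimes_{\mathcal{D}(X)} N \longrightarrow \wideparen{\mathcal{D}}(U) \otimes_{\mathcal{D}(X)[f^{-1}]} N = \mathcal{M}(U)
\end{equation*}
is an isomorphism of $\wideparen{\mathcal{D}}(X)$-modules. Granting this, the source is the coadmissible module $\wideparen{N}$ already noted to be well behaved, and rerunning the same argument on every affinoid subdomain $V \subseteq X$ (where $f|_V$ continues to define a hypersurface and the chosen generators and $b$-function relations restrict) identifies $j_*\mathcal{M}$ with the coadmissible sheaf attached to $\wideparen{N}$.

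The bijectivity of $\Phi$ will be established using the $b$-function identity. Iterating the defining equation $P_i(s) \, m_i f^{s+1} = b_i(s) \, m_i f^s$ at $s = -1, -2, \ldots, -n$ produces, for all $n$ avoiding the finitely many roots of the $b_i$, explicit expressions
\begin{equation*}
f^{-n} m_i = Q_{i,n} \cdot m_i, \qquad Q_{i,n} = \Bigl(\prod_{j=1}^n b_i(-j)^{-1}\Bigr)\, P_i(-1)P_i(-2)\cdots P_i(-n) \in \mathcal{D}(X).
\end{equation*}
These formulas realize every polar element $f^{-n} m_i$ on the right-hand side of $\Phi$ as the image of $Q_{i,n} \otimes m_i$ on the left. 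Surjectivity of $\Phi$ then amounts to showing that any convergent series $\sum_n c_n f^{-n} m_i$ in $\mathcal{M}(U)$ has its natural preimage $\sum_n c_n Q_{i,n} \otimes m_i$ convergent in $\wideparen{N}$; injectivity follows because the same identities let us rewrite any relation in $\wideparen{\mathcal{D}}(U) \otimes_{\mathcal{D}(X)[f^{-1}]} N$ as a relation over $\wideparen{\mathcal{D}}(X)$.

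The main obstacle is therefore a quantitative estimate on the Fréchet norms of the $Q_{i,n}$. On each of the defining Banach seminorms of $\wideparen{\mathcal{D}}(X)$ — given by a Lie lattice in the tangent sheaf via the Rees construction of \cite{Ardakov1} — the operators $P_i(-j)$ contribute only a geometrically bounded factor in the norm of $Q_{i,n}$, so one reduces to controlling the scalars $\prod_{j=1}^n |b_i(-j)|^{-1}$. Positive type of a root $\lambda$ of $b_i$ in the sense of \cite[Definition 13.1.1]{Kedlaya} is, by design, a subexponential lower bound on $|\lambda - j|$ as $j \to \infty$, and it translates directly into subexponential growth of $\prod_{j=1}^n |b_i(-j)|^{-1}$. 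This subexponential growth is precisely what the Fréchet topology of $\wideparen{\mathcal{D}}(X)$, which is coarser than that of $\wideparen{\mathcal{D}}(U)$ only by a subexponential margin in each Banach piece, can absorb. The central technical step, and the hardest one, is thus to show that under positive type $\sum_n c_n Q_{i,n}$ converges in $\wideparen{\mathcal{D}}(X)$ whenever $\sum_n c_n f^{-n}$ converges in $\wideparen{\mathcal{D}}(U)$.

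Once this bound is in hand, $\Phi$ is a continuous bijection of Fréchet $\wideparen{\mathcal{D}}(X)$-modules, hence a topological isomorphism, and the reduction of the first paragraph yields coadmissibility of $j_*\mathcal{M}$. That the positive type hypothesis is sharp at this threshold is exactly what Theorem~\ref{introlambda} reflects.
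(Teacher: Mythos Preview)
Your approach is essentially the paper's: both hinge on the growth estimate for the operators $Q_{i,n}$ (the paper's Lemma~\ref{boundedprod}), with positive type controlling $\prod_s |b_i(s)|^{-1}$ geometrically so that these operators lie in $U(\pi^n\mathcal{L})$ after a suitable $\pi$-power rescaling. The paper packages this as an equivalence of two locally convex topologies on $N$ --- the quotient topologies from $D^k \twoheadrightarrow N$ and from $D[f^{-1}]^k \twoheadrightarrow N$ --- which yields the isomorphism of completions in one stroke and sidesteps the separate injectivity argument you only gesture at; it then deduces the sheaf statement from Proposition~\ref{localiso} (via the topology equivalence of Lemma~\ref{equivtop}) rather than by rerunning the computation on each affinoid.
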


We remark that the assumption that $X$ is an affinoid with free tangent sheaf is only used to apply \cite[Th\'eor\`eme 3.1.1]{Mebkhout} directly to $X$. If $X$ is an arbitrary smooth rigid analytic $K$-variety, we can pass to a suitable affinoid covering $(X_i)$ to obtain an analogue of Theorem \ref{intropostype}, with the condition on the roots of $b$-functions imposed for each $X_i$.\\
\\
We conclude by considering the explicit case of $\mathcal{M}_{\lambda}=\mathcal{O}_Ux^{\lambda}$ on the punctured unit disk in section 5. In this case, the natural choice of $b$-function has root $\lambda$ (or an integral shift of it), and the sufficient condition above turns out to be also necessary, as we show in Theorem \ref{posiffcoad}.
\begin{theorem}
\label{intropunct}
Let $j: U\to X$ be the embedding of the punctured unit disk. Then $j_*\mathcal{M}_\lambda$ is a coadmissible $\wideparen{\mathcal{D}}_X$-module if and only if $\lambda$ is of positive type.
\end{theorem}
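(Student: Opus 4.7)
The forward ($\Leftarrow$) direction is a direct application of Theorem~\ref{intropostype}. Here $\mathcal{N}(X) = K[x,x^{-1}]\,x^\lambda$ is generated as an $A[x^{-1}]$-module by the single element $x^\lambda$, and since $\partial \cdot x^{s+\lambda+1} = (s+\lambda+1)\, x^{s+\lambda}$, the associated $b$-function is $b(s) = s + \lambda + 1$. Its unique root $-\lambda-1$ has the same type as $\lambda$ itself, the notion being manifestly invariant under negation and integer translation. Hence when $\lambda$ is of positive type, Theorem~\ref{intropostype} produces coadmissibility of $j_*\mathcal{M}_\lambda$ directly.

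For the converse ($\Rightarrow$), which is the content of Theorem~\ref{introlambda}, the plan is to assume $\lambda$ is of type zero and exhibit an explicit obstruction via the canonical comparison map
\begin{equation*}
\wideparen{N} = \wideparen{\mathcal{D}}(X)/\wideparen{\mathcal{D}}(X)(\lambda - x\partial) \longrightarrow \mathcal{M}(U).
\end{equation*}
An element of $\mathcal{M}(U)$ carries a Laurent tail $\sum_{n\ge 0} a_n\, x^{\lambda-n}$ with arbitrary $a_n \in K$ satisfying $|a_n|\,r^{-n}\to 0$ on each affinoid annulus $\{r \le |x| \le 1\}$, and no further arithmetic constraint. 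By contrast, the image of $\wideparen{N}$ under the comparison map is $\wideparen{\mathcal{D}}(X)\cdot x^\lambda$, and the coefficient of $x^{\lambda-n}$ in $\sum_{i,j} a_{ij}\, x^i \partial^j \cdot x^\lambda$ equals $\sum_{j\ge n} a_{j-n,\,j}\,\lambda(\lambda-1)\cdots(\lambda-j+1)$, with the $a_{ij}$ constrained by the Fréchet-Stein topology on $\wideparen{\mathcal{D}}(X) = \varprojlim_m D_m$.

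The crux, and the step I expect to be the main obstacle, is to leverage the type-zero hypothesis. By definition, there is a subsequence $n_k \to \infty$ along which $|\lambda(\lambda-1)\cdots(\lambda-n_k+1)|$ decays faster than any geometric rate. A modest choice of $a_{n_k} \in K$ (and zero for the remaining indices) yields a genuine section $\sum_k a_{n_k}\, x^{\lambda-n_k}$ of $\mathcal{M}(U)$, whereas any preimage in $\wideparen{N}$ would force the norms of the $a_{ij}$ to exceed every seminorm in the Fréchet-Stein presentation of $\wideparen{\mathcal{D}}(X)$. The delicate step is to rule out subtle cancellations among different $(i,j)$ with $i - j = -n_k$ that might achieve the target coefficient while respecting the Fréchet topology; here we appeal to the equivalent reformulations of coadmissibility in terms of the comparison map developed earlier in the paper, which translate the obstruction at the level of global sections into the non-coadmissibility of $j_*\mathcal{M}_\lambda$ as a sheaf on $X$.
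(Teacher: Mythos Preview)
Your forward direction matches the paper's (apply Theorem~\ref{intropostype} to the generator $x^\lambda$), modulo a convention mismatch: the paper uses Mebkhout's functional equation $P(s)f^{-s}m=b(s)f^{-s-1}m$, giving $b(s)=\lambda-s$ with root $\lambda$ directly rather than $-\lambda-1$. Also $\mathcal{N}(X)=K\langle x\rangle[x^{-1}]x^\lambda$, not $K[x,x^{-1}]x^\lambda$.

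For the converse your strategy is the paper's, but there is a genuine gap at exactly the point you flag. The ``equivalent reformulations'' of Proposition~\ref{surjiffcoad} only say that coadmissibility of $M_\lambda$ is equivalent to surjectivity of $\theta$; they give you no tool for \emph{establishing} non-surjectivity and say nothing whatsoever about cancellations. You must still show that your target element is not $(\sum_j g_j\partial^j)\cdot x^\lambda$ for any $\sum_j g_j\partial^j\in\wideparen{\mathcal{D}}(X)$ with $g_j\in K\langle x\rangle$, and the coefficient of $x^{\lambda-n}$ in such an expression is an infinite sum over all $(i,j)$ with $j-i=n$. The paper resolves this by a reduction to constant coefficients: writing $g_j=\sum_i g_{ij}x^i$, one sets $h_t=g_{0,t}+\sum_{j>t}g_{j-t,j}(\lambda-t)\cdots(\lambda-j+1)\in K$ and checks, using only the estimate $|\lambda-a|\le 1$ for all $a\in\mathbb{Z}$ (valid since type zero forces $|\lambda|\le 1$), that $\sum_t h_t\partial^t$ still lies in $\wideparen{\mathcal{D}}(X)$ and hits the same element of $M_\lambda$. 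With scalar $h_t$ one can compare coefficients termwise, and a carefully diagonalised target $m=\sum_r \pi^{(2i_r-j_r)r}x^{\lambda-i_r}$ (the exponents $i_r$ calibrated against the $r$-th seminorm, not merely a single subsequence along which the falling factorial is small) then forces $|h_{i_r}|>1$ for all $r$, contradicting $\sum_t h_t\partial^t\in\wideparen{\mathcal{D}}(X)$. This constant-coefficient reduction is the substance of Proposition~\ref{coadimpliespos} and is absent from your outline.
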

We thus establish the examples in Theorem \ref{introlambda}.\\
\\
This paper has two appendices: in Appendix A, we discuss elementary properties of completed tensor products for locally convex topological modules. In particular, we show that Ardakov--Wadsley's coadmissible tensor product $\widehat{\otimes}$ agrees with the completed tensor product when coadmissible modules are equipped with their canonical Fr\'echet topology.\\
While most results in this appendix are probably well-known to experts, we could not find a reference in the required level of generality. \\
\\
In Appendix B, we show that our definition of positive type (Definition \ref{defpostype}) is consistent with the theory of type in the literature (e.g. \cite[Definition 13.1.1]{Kedlaya}).\\
\\
We would like to thank Konstantin Ardakov for his suggestions and for his continued interest in this work. We also thank Arthur-C\'esar Le Bras for his example of a type zero number.

\section{General setup}
We briefly introduce our geometric setup and recall some terminology from \cite{Ardakov1}.
\subsection{Spaces and sheaves}
Let $R$ be the valuation ring of $K$ consisting of elements with norm $\leq 1$, and let $\pi\in R\setminus\{0\}$ with $|\pi|<1$.\\
Let $X=\Sp A$ be a smooth affinoid $K$-variety, $f\in A$ non-constant, $Z=\{x\in X: |f(x)|=0\}$, $U=X\setminus Z$. Let $j: U\to X$ denote the open embedding.\\
For simplicity, we will assume that the tangent sheaf of $X$ is free, and we write $L=\mathcal{T}_X(X)=\Der_K(A)$.
\begin{definition}[{see \cite[Definitions 3.1, 6.1]{Ardakov1}}]
An $R$-subalgebra $\mathcal{A}\subset A$ is an \textbf{affine formal model} of $A$ if it is an $R$-algebra of topologically finite type such that $\mathcal{A}\otimes_R K\cong A$. \\
If $\mathcal{A}\subset A$ is an affine formal model, we call an $\mathcal{A}$-submodule $\mathcal{L}\subseteq L$ an $(R, \mathcal{A})$-\textbf{Lie lattice} if the following is satisfied:
\begin{enumerate}[(i)]
\item $\mathcal{L}$ is finitely generated as an $\mathcal{A}$-module, and $\mathcal{L}\otimes_R K\cong L$;
\item $\mathcal{L}$ is closed under the Lie bracket on $L$;
\item $\mathcal{A}$ is stable under the natural action of $\mathcal{L}$ on $A$.
\end{enumerate}
If $Y\cong \Sp B$ is an affinoid subdomain of $X$, we say that an affine formal model $\mathcal{B}\subset B$ is $\mathcal{L}$-\textbf{stable} if it contains the image of $\mathcal{A}$ under the natural restriction morphism $A\to B$ and is preserved under the action of $\mathcal{L}$.
\end{definition}
We now fix an affine formal model $\mathcal{A}\subset A$ and a free $(R, \mathcal{A})$-Lie lattice $\mathcal{L}\subset L$ by choosing a free generating set and rescaling suitably. Without loss of generality, we can assume $f\in \mathcal{A}$.\\
We will consider two different sheaves of differential operators in this paper: the \emph{algebraic} differential operators $\mathcal{D}_X$, and its completion $\wideparen{\mathcal{D}}_X$.\\
Note that for any affinoid subdomain $Y=\Sp B$, the commutator Lie bracket gives $\mathcal{T}_X(Y)$ the structure of a $(K, B)$-Lie-Rinehart algebra in the sense of \cite{Rinehart}, so that we can form the (relative) enveloping algebra $U_B(\mathcal{T}_X(Y))$ entirely analogously to the enveloping algebra of a Lie algebra. We refer to the end of this subsection for an explicit description.
\begin{definition}
The sheaf $\mathcal{D}_X$ is the sheaf of $K$-algebras on $X$ defined by
\begin{equation*}
\mathcal{D}_X(Y)=U_B(\mathcal{T}_X(Y))
\end{equation*}
for any affinoid subdomain $Y\cong \Sp B$.
\end{definition} 

In order to define $\wideparen{\mathcal{D}}$, we recall the auxiliary sheaves $\mathcal{D}_n$ on the site of $\pi^n\mathcal{L}$-accessible subdomains. Recall that an admissible open affinoid subset $Y\subset X$ is called a rational subdomain if it is of the form
\begin{equation*}
Y=X\left(\frac{f_1}{f_0}, \dots, \frac{f_n}{f_0}\right):=\{x\in X: |f_i(x)|\leq |f_0(x)|, \ i=1, \dots n\}
\end{equation*}
for some $f_0, f_1, \dots, f_n\in A$ generating the unit ideal. As usual, we simplify $X(\frac{g}{1})$ to $X(g)$ and $X(\frac{1}{g})$ to $X(g^{-1})$ for any $g\in A$.
\begin{definition}[{\cite[Definitions 4.6, 4.8]{Ardakov1}}]
\label{rationalac}
Let $Y$ be a rational subdomain of $X$. If $Y=X$, we say that it is $\mathcal{L}$-accessible in 0 steps. Inductively, if $n\geq 1$ then we say that it is $\mathcal{L}$-accessible in $n$ steps if there exists a chain $Y\subseteq Z\subseteq X$ such that the following is satisfied:
\begin{enumerate}[(i)]
\item $Z\subseteq X$ is $\mathcal{L}$-accessible in $(n-1)$ steps;
\item $Y=Z(g)$ or $Z(g^{-1})$ for some non-zero $g\in \mathcal{O}(Z)$;
\item there is an $\mathcal{L}$-stable affine formal model $\mathcal{C}\subset \mathcal{O}(Z)$ such that $\mathcal{L}\cdot g\subseteq \mathcal{C}$, where
\begin{equation*}
\mathcal{L}\cdot g=\{\phi(g): \ \phi\in \mathcal{L}\}, 
\end{equation*}
where the action of $\mathcal{L}$ on $\mathcal{O}(Z)$ is induced from the restriction map 
\begin{equation*}
\mathcal{T}_X(X)\to \mathcal{T}_X(Z)=\mathrm{Der}_K(\mathcal{O}(Z)).
\end{equation*}
\end{enumerate}
\end{definition}
Roughly speaking, a rational subdomain $Y$ is $\mathcal{L}$-accessible in one step if $Y=X(g)$ or $Y=X(g^{-1})$ for some $g\in A$ which is compatible with $\mathcal{L}$: if $Y=X(g)$, there exists an $\mathcal{L}$-stable model $\mathcal{C}\subset A$ (which by definition satisfies $\mathcal{A}\subset \mathcal{C}$) such that the image $\mathcal{B}$ of $\mathcal{C}\langle g\rangle$ is an affine formal model of $\mathcal{O}(Y)$ and the image of $\mathcal{B} \otimes_{\mathcal{A}}\mathcal{L}$ is an $(R, \mathcal{B})$-Lie lattice in $\mathcal{T}_X(Y)$. If $Y=X(g^{-1})$, the same description holds mutatis mutandis.\\ 
\\
We will be concerned with subdomains which can be obtained by repeating this process iteratively and glueing:\\ 
A rational subdomain $Y\subseteq X$ is said to be $\mathcal{L}$-accessible if it is $\mathcal{L}$-accessible in $n$ steps for some $n\in \mathbb{N}$.\\
An affinoid subdomain $Y$ of $X$ is said to be $\mathcal{L}$-\textbf{accessible} if it is $\mathcal{L}$-admissible and there exists a finite covering $Y=\cup_{j=1}^r Y_j$ where each $Y_j$ is an $\mathcal{L}$-accessible rational subdomain of $X$.\\
A finite covering $\{Y_j\}$ of $X$ by affinoid subdomains is said to be $\mathcal{L}$-accessible if each $Y_j$ is an $\mathcal{L}$-accessible affinoid subdomain of $X$.
Note that any affinoid subdomain is $\pi^n\mathcal{L}$-accessible for sufficiently large $n$ by \cite[Proposition 7.6]{Ardakov1}. For any $n\geq 0$, consider the sheaf of $K$-algebras $\mathcal{D}_n$ on the site of $\pi^n\mathcal{L}$-accessible subdomains, given by
\begin{equation*}
Y\mapsto \widehat{U_{\mathcal{B}}(\mathcal{B}\otimes_{\mathcal{A}} \pi^n\mathcal{L})}\otimes_R K
\end{equation*}
for $Y\cong \Sp B$ with $\pi^n\mathcal{L}$-stable affine formal model $\mathcal{B}\subset B$. \\
Note that as a $B$-module, $\mathcal{D}_n(Y)$ is naturally isomorphic to $B\widehat{\otimes}_A \mathcal{D}(X)$, where $\mathcal{D}(X)$ is equipped with the seminorm whose unit ball is generated by $\mathcal{A}$ and $\pi^n\mathcal{L}$.
\begin{definition}[{see \cite[Definition 9.3]{Ardakov1}}]
The sheaf $\wideparen{\mathcal{D}}_X$ is the sheaf of $K$-algebras on $X$ defined by
\begin{equation*}
\wideparen{\mathcal{D}}_X(Y)= \wideparen{U_B(\mathcal{T}_X(Y))}=\varprojlim \mathcal{D}_n(Y)
\end{equation*}
for any affinoid subdomain $Y\cong \Sp B$.
\end{definition}
We can view $\wideparen{\mathcal{D}}_X(Y)$ as the completion of $\mathcal{D}_X(Y)$ with respect to every submultiplicative seminorm extending the supremum norm on $B$.\\
\\
Moreover, we have the sheaf $\mathcal{O}_X(*Z)$ of meromorphic functions with poles in $Z$, i.e.
\begin{equation*}
\mathcal{O}_X(*Z)(Y)=B[f^{-1}]
\end{equation*}
for any affinoid subdomain $Y\cong \Sp B$.\\
We set $\mathcal{D}_X(*Z)=\mathcal{O}_X(*Z)\otimes_{\mathcal{O}_X} \mathcal{D}_X$, a sheaf of $K$-algebras with the obvious multiplication.\\
\\
For the convenience of the reader, we describe here explicitly some sections of the sheaves we have introduced so far. Let $\partial_1, \dots, \partial_a$ be a free generating set of the Lie lattice $\mathcal{L}\subset L$ as an $\mathcal{A}$-module, and abbreviate
\begin{equation*}
\partial^{\underline{i}}=\partial_1^{i_1}\dots \partial_a^{i_a}
\end{equation*}
for any $\underline{i}=(i_1, \dots, i_a)\in \mathbb{N}^a$, and $|\underline{i}|=i_1+\dots +i_a$.
\begin{enumerate}[(i)]
\item $\mathcal{O}_X(X)=A$.
\item $\mathcal{O}_X(*Z)(X)=A[f^{-1}]$.
\item $\mathcal{O}_X(U)=\varprojlim A\langle \pi^nf^{-1}\rangle$.
\item $\mathcal{D}_X(X)=\{ \text{finite sums}\ \sum_{\underline{i}\in \mathbb{N}^a} g_{\underline{i}} \partial^{\underline{i}}: \ g_{\underline{i}}\in A\}$.
\item $\mathcal{D}_X(*Z)(X)=\{ \text{finite sums} \ \sum_{\underline{i}\in \mathbb{N}^a} g_{\underline{i}} \partial^{\underline{i}}: g_{\underline{i}}\in A[f^{-1}]\}$.
\item $\mathcal{D}_n(X)=\{\sum_{\underline{i}\in \mathbb{N}^a} g_{\underline{i}}\partial^{\underline{i}}: \ g_{\underline{i}}\in A, \ |\pi|^{-n|\underline{i}|}|g_{\underline{i}}|\to 0 \ \text{as} \ |\underline{i}|\to \infty\}$.
\item $\wideparen{\mathcal{D}}_X(X)=\wideparen{U_A(L)}=\{ \sum_{\underline{i}\in \mathbb{N}^a} g_{\underline{i}} \partial^{\underline{i}}: \ g_{\underline{i}}\in A, |\pi|^{-n|\underline{i}|}|g_{\underline{i}}|\to 0 \ \text{as} \ |\underline{i}|\to \infty \ \forall n\geq 0\}$.
\end{enumerate}

\subsection{Fr\'echet--Stein algebras and coadmissibility}
Let $U_n=X(\pi^nf^{-1})$, which is obtained from $X$ by removing a tubular neighbourhood of $Z$. For example, if $X=\Sp K\langle x\rangle$ is the closed unit disk and $f=x$, then $U_n$ is the closed annulus with inner radius $|\pi|^n$. Note that the $U_n$ form an admissible covering of $U$. \\
Since $f\in \mathcal{A}$, we have $\mathcal{L}\cdot f\subseteq \mathcal{A}$ by definition of $(R, \mathcal{A})$-Lie lattice, and thus $\pi^n\mathcal{L}\cdot \pi^{-n}f\subseteq \mathcal{A}$. In particular, $U_n$ is $\pi^m\mathcal{L}$-accessible for any $m\geq n$. We thus obtain $K$-Banach algebras $\mathcal{D}_m(U_n)$ for any $m\geq n$, and $\wideparen{\mathcal{D}}(U)=\varprojlim \mathcal{D}_n(U_n)$.
\begin{definition}[{see \cite[section 3]{Schneider}}]
A topological $K$-algebra $\mathcal{U}$ is a (two-sided) \textbf{Fr\'echet--Stein algebra} if $\mathcal{U}\cong \varprojlim \mathcal{U}_n$, where for each $n$ the following is satisfied:
\begin{enumerate}[(i)]
\item $\mathcal{U}_n$ is a (two-sided) Noetherian Banach $K$-algebra;
\item the morphism $\mathcal{U}_{n+1}\to \mathcal{U}_n$ makes $\mathcal{U}_n$ a flat $\mathcal{U}_{n+1}$-module on both sides and has dense image.
\end{enumerate}
\end{definition}

It was shown in \cite[Theorem 6.4]{Ardakov1} that $\wideparen{\mathcal{D}}(Y)$ is a Fr\'echet--Stein algebra for any affinoid subdomain $Y$. Moreover, $\mathcal{O}_X(U)=\varprojlim \mathcal{O}(U_n)$ and $\wideparen{\mathcal{D}}(U)=\varprojlim \mathcal{D}_n(U_n)$ are both Fr\'echet--Stein algebras.

\begin{definition}[{see \cite[section 3]{Schneider}}]
A left module $M$ over a Fr\'echet--Stein algebra $\mathcal{U}=\varprojlim \mathcal{U}_n$ is called coadmissible if $M\cong \varprojlim M_n$, where for each $n$ the following is satisfied:
\begin{enumerate}[(i)]
\item $M_n$ is a finitely generated $\mathcal{U}_n$-module;
\item the natural morphism $\mathcal{U}_n\otimes_{\mathcal{U}_{n+1}} M_{n+1}\to M_n$ is an isomorphism of $\mathcal{U}_n$-modules.
\end{enumerate}
\end{definition}
Coadmissible modules over a Fr\'echet--Stein algebra form an abelian category, containing all finitely presented modules (see \cite[Corollary 3.5, Corollary 3.4.v)]{Schneider}). Recall from the comment after \cite[Corollary 3.5]{Schneider} that each coadmissible module $M$ over a Fr\'echet--Stein algebra $\mathcal{U}$ is equipped with a canonical Fr\'echet topology. We will abbreviate this by talking about the canonical $\mathcal{U}$-topology of $M$. This naturally includes the case of finitely generated modules over Noetherian Banach $K$-algebras, corresponding to a constant projective system.
\subsection{Localization on $U$}
We need to introduce the notion of completed tensor product. In the case of tensor products over $K$, this is done in \cite{SchneiderNFA}.
\begin{definition}[{see \cite[ section 17.B]{SchneiderNFA}}]
Given two locally convex $K$-vector spaces $V_1$, $V_2$, the \textbf{projective tensor product topology} on $V_1\otimes_K V_2$ is defined by lattices of the form $L_1\otimes_R L_2$, where $L_1$ (resp. $L_2$) runs over all open lattices in $V_1$ (resp. $V_2$). 
\end{definition}
\begin{definition}
A $K$-algebra $\mathcal{U}$ is called a locally convex algebra if it is equipped with a locally convex topology such that the multiplication map $\mathcal{U}\times \mathcal{U}\to \mathcal{U}$ is continuous.\\
A locally convex module over a locally convex algebra $\mathcal{U}$ is a $\mathcal{U}$-module $V$ equipped with a locally convex topology such that the action map $\mathcal{U}\times V\to V$ is continuous.
\end{definition}
\begin{definition}
Let $\mathcal{U}$ be a locally convex $K$-algebra, $V_1$ a locally convex right $\mathcal{U}$-module and $V_2$ a locally convex left $\mathcal{U}$-module. The projective tensor product topology on $V_1\otimes_{\mathcal{U}}V_2$ is induced by the natural surjection $\rho: V_1\otimes_K V_2\to V_1\otimes_{\mathcal{U}} V_2$. The \textbf{completed tensor product} $V_1\widehat{\otimes}_{\mathcal{U}} V_2$ is the Hausdorff completion of $V_1\otimes_{\mathcal{U}} V_2$ with respect to the projective tensor product topology.
\end{definition}
We refer to Appendix A for the usual basic properties of completed tensor products. We in particular verify that the coadmissible tensor product $\wideparen{\otimes}$ from \cite[Lemma 7.3]{Ardakov1} is just a special case of the completed tensor product defined above, so that we can phrase the definition of a coadmissible $\wideparen{\mathcal{D}}$-module (from \cite[Definitions 8.3, 9.4]{Ardakov1}) as follows.
\begin{definition}
Let $Y$ be a smooth rigid analytic $K$-variety. A $\wideparen{\mathcal{D}}_Y$-module $\mathcal{M}$ is \textbf{coadmissible} if there exists an admissible affinoid covering $(Y_i)$ of $Y$ such that for each $i$ the following is satisfied:
\begin{enumerate}[(i)]
\item $\mathcal{M}(Y_i)$ is coadmissible over the Fr\'echet--Stein algebra $\wideparen{\mathcal{D}}_Y(Y_i)$;
\item the natural morphism
\begin{equation*}
\wideparen{\mathcal{D}}_Y(Z)\widehat{\otimes}_{\wideparen{\mathcal{D}}(Y_i)} \mathcal{M}(Y_i)\to \mathcal{M}(Z)
\end{equation*}
is an isomorphism for each affinoid subdomain $Z\subset Y_i$, where $\mathcal{M}(Y_i)$ is equipped with the canonical $\wideparen{\mathcal{D}}(Y_i)$-topology.
\end{enumerate}
\end{definition}
Returning to the set-up of subsection 2.1, we can now use completed tensor products to describe sections of $\wideparen{\mathcal{D}}$ and of coadmissible $\wideparen{\mathcal{D}}$-modules explicitly.
\begin{proposition}
\label{Dcapoverstein}
Let $Y=\Sp B$ be an affinoid subdomain of $X$.
\begin{enumerate}[(i)]
\item There is a natural isomorphism
\begin{equation*}
B\widehat{\otimes}_A \wideparen{\mathcal{D}}_X(X)\to \wideparen{\mathcal{D}}_X(Y)
\end{equation*}
of locally convex $B$-modules.
\item There is a natural isomorphism
\begin{equation*}
B\widehat{\otimes}_A \wideparen{\mathcal{D}}_X(U)\to \wideparen{\mathcal{D}}_X(Y\cap U)
\end{equation*}
of locally convex $B$-modules.
\item Let $\mathcal{M}$ be a coadmissible $\wideparen{\mathcal{D}}_X$-module. There are natural isomorphisms
\begin{equation*}
B\widehat{\otimes}_A \mathcal{M}(X)\cong \wideparen{\mathcal{D}}(Y)\widehat{\otimes}_{\wideparen{\mathcal{D}}(X)}\mathcal{M}(X)\cong  \mathcal{M}(Y)
\end{equation*}
of locally convex $B$-modules.
\item Let $\mathcal{M}$ be a coadmissible $\wideparen{\mathcal{D}}_U$-module. There are natural isomorphisms
\begin{equation*}
B\widehat{\otimes}_A \mathcal{M}(U)\cong \wideparen{\mathcal{D}}(Y\cap U)\widehat{\otimes}_{\wideparen{\mathcal{D}}(U)}\mathcal{M}(U)\cong \mathcal{M}(Y\cap U)
\end{equation*}
of locally convex $B$-modules.
\end{enumerate}
\end{proposition}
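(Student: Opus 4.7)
The four statements all fit the same template: sections of (coadmissible modules over) $\wideparen{\mathcal{D}}$ on an affinoid subdomain $Y$ should be recovered from sections on a containing affinoid by applying the functor $B\widehat{\otimes}_A -$. My plan is to establish (i) directly from the construction, deduce (ii) by applying (i) level-wise along the admissible cover $(U_n)$ of $U$, and then obtain (iii) and (iv) by associativity of completed tensor products combined with the tensor-product characterisation of sections of a coadmissible $\wideparen{\mathcal{D}}$-module.

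For (i), the essential observation is already recorded after Definition \ref{rationalac}: for $Y\cong \Sp B$ that is $\pi^n\mathcal{L}$-accessible, the Banach algebra $\mathcal{D}_n(Y)$ is by construction isomorphic to $B\widehat{\otimes}_A \mathcal{D}_n(X)$. I would then take the inverse limit over $n$ and identify $\varprojlim \bigl(B\widehat{\otimes}_A \mathcal{D}_n(X)\bigr)$ with $B\widehat{\otimes}_A \wideparen{\mathcal{D}_X}(X)$ using the comparison established in the appendix between the completed tensor product and the coadmissible tensor product $B\wideparen{\otimes}_A -$ of \cite[Lemma 7.3]{Ardakov1}: the latter is by definition such an inverse limit, and in this Fr\'echet--Stein context it computes the Hausdorff completion of the usual tensor product with respect to the projective topology.

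For (ii), the same argument applies level-wise: each $Y_n := Y\cap U_n = Y(\pi^n f^{-1})$ is an affinoid subdomain of $X$, so $\mathcal{D}_m(Y_n) \cong B\widehat{\otimes}_A \mathcal{D}_m(U_n)$ for all admissible pairs $(m,n)$, and passing to the inverse limit in both $m$ and $n$ yields the claim. For (iii) and (iv), coadmissibility (applied either directly or via the local-to-global descent from \cite[Section 8]{Ardakov1}) identifies $\mathcal{M}(Y)$ with $\wideparen{\mathcal{D}}(Y)\widehat{\otimes}_{\wideparen{\mathcal{D}}(X)}\mathcal{M}(X)$ (and analogously with $U$ in place of $X$); combining this with (i) or (ii) reduces the remaining isomorphism to the associativity identity
\begin{equation*}
\bigl(B \widehat{\otimes}_A \wideparen{\mathcal{D}}(X)\bigr) \widehat{\otimes}_{\wideparen{\mathcal{D}}(X)} \mathcal{M}(X) \cong B \widehat{\otimes}_A \mathcal{M}(X),
\end{equation*}
which is a formal property of the projective tensor product to be recorded in the appendix.

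The main obstacle, as always in this setting, is the bookkeeping of topologies: I would have to check that the structural maps are continuous with dense image so that the Hausdorff completions identify correctly, and that the completed tensor product is sufficiently well-behaved on the coadmissible systems at hand (flat base change at each Banach level, and the interchange with the countable inverse limits defining the Fr\'echet--Stein structure). A secondary issue is that the definition of a coadmissible $\wideparen{\mathcal{D}}$-module postulates the tensor-product formula only for \emph{some} admissible covering; to apply it to an arbitrary affinoid subdomain $Y$ as in (iii) and (iv), I would appeal to the standard descent for coadmissible modules over a Fr\'echet--Stein algebra to upgrade the formula from a covering to all affinoid subdomains.
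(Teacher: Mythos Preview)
Your plan is correct and matches the paper's proof almost step for step: (i) is obtained from $\mathcal{D}_n(Y)\cong B\widehat{\otimes}_A\mathcal{D}_n(X)$ together with the inverse-limit compatibility of $\widehat{\otimes}$, (ii) is the same level-wise argument along the cover $(U_n)$ (the paper inserts the intermediate identification $\mathcal{O}(Y\cap U_n)\cong B\widehat{\otimes}_A\mathcal{O}(U_n)$ from \cite[7.1.4/4]{BGR} and a shift $n\mapsto n+k$ to guarantee accessibility), and (iii)--(iv) follow from associativity once the tensor formula for sections is in hand. The only place where the paper does more work than your outline suggests is (iv): since $U$ and $Y\cap U$ are not affinoid, the isomorphism $\mathcal{M}(Y\cap U)\cong \wideparen{\mathcal{D}}(Y\cap U)\widehat{\otimes}_{\wideparen{\mathcal{D}}(U)}\mathcal{M}(U)$ is not an instance of the definition of coadmissibility and is instead verified directly from the Fr\'echet--Stein presentations $\wideparen{\mathcal{D}}(U)=\varprojlim \mathcal{D}_n(U_n)$ and $\wideparen{\mathcal{D}}(Y\cap U)=\varprojlim \mathcal{D}_{n+k}(Y\cap U_n)$---precisely the ``interchange with countable inverse limits'' you already anticipated as the main technical point.
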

\begin{proof}
\begin{enumerate}[(i)]
\item Note that $\wideparen{\mathcal{D}}(Y)\cong \varprojlim \mathcal{D}_n(Y)\cong \varprojlim (B\widehat{\otimes}_A \mathcal{D}_n(X))$, so that the result follows from Lemma \ref{ctpprops}.(iii).
\item Fix a positive integer $k$ such that $Y\cap U_0=V(f^{-1})$ is $\pi^k\mathcal{L}$-accessible. Then $Y\cap U_n$ is $\pi^{n+k}\mathcal{L}$-accessible, and $\wideparen{\mathcal{D}}(Y\cap U)\cong \varprojlim \mathcal{D}_{n+k}(Y\cap U_n)$. As 
\begin{equation*}
\mathcal{D}_{n+k}(Y\cap U_n)\cong \mathcal{O}(Y\cap U_n)\widehat{\otimes}_{\mathcal{O}(U_n)} \mathcal{D}_{n+k}(U_n)
\end{equation*}
and $\mathcal{O}(Y\cap U_n)\cong B\widehat{\otimes}_A \mathcal{O}(U_n)$ by \cite[Proposition 7.1.4/4]{BGR}, it follows from associativity of the completed tensor product (Lemma \ref{assctp}) and Lemma \ref{ctpprops}.(i) that
\begin{equation*}
\mathcal{D}_{n+k}(Y\cap U_n)\cong B\widehat{\otimes}_A \mathcal{D}_{n+k}(U_n),
\end{equation*}
and the result follows from Lemma \ref{ctpprops}.(iii).
\item By definition of coadmissibility, $\mathcal{M}(Y)\cong \wideparen{\mathcal{D}}(Y)\widehat{\otimes}_{\wideparen{\mathcal{D}}(X)} \mathcal{M}(X)$, so that the result follows immediately from (i) by using associativity and Lemma \ref{ctpprops}.(i).
\item Fix $k$ as in (ii). As before, $\wideparen{\mathcal{D}}(U)\cong \varprojlim \mathcal{D}_n(U_n)$ and 
\begin{equation*}
\wideparen{\mathcal{D}}(Y\cap U)\cong \varprojlim \mathcal{D}_{n+k}(Y\cap U_n)
\end{equation*} 
are Fr\'echet--Stein algebras. The isomorphisms 
\begin{equation*}
\mathcal{M}(U)\cong \varprojlim \left(\mathcal{D}_n(U_n)\otimes_{\wideparen{\mathcal{D}}(U_n)}\mathcal{M}(U_n)\right)
\end{equation*} 
and 
\begin{equation*}
\mathcal{M}(Y\cap U)\cong \varprojlim \left(\mathcal{D}_{n+k}(Y\cap U_n)\otimes_{\wideparen{\mathcal{D}}(U_n)}\mathcal{M}(U_n)\right)
\end{equation*}
exhibit these modules as coadmissible over $\wideparen{\mathcal{D}}(U)$ resp. $\wideparen{\mathcal{D}}(Y\cap U)$. Thus
\begin{align*}
\mathcal{M}(Y\cap U)&\cong \varprojlim \left(\mathcal{D}_{n+k}(Y\cap U_n)\otimes_{\wideparen{\mathcal{D}}(U_n)} \mathcal{M}(U_n) \right)\\
& \cong \varprojlim \left(\mathcal{D}_{n+k}(Y\cap U_n) \otimes_{\mathcal{D}_n(U_n)} \left(\mathcal{D}_n(U_n)\otimes_{\wideparen{\mathcal{D}}(U_n)} \mathcal{M}(U_n)\right) \right)\\
& \cong \wideparen{\mathcal{D}}(Y\cap U)\wideparen{\otimes}_{\wideparen{\mathcal{D}}(U)}\mathcal{M}(U).
\end{align*}
Applying Lemma \ref{ctpprops}.(iv) gives $\mathcal{M}(Y\cap U)\cong \wideparen{\mathcal{D}}(Y\cap U)\widehat{\otimes}_{\wideparen{\mathcal{D}}(U)} \mathcal{M}(U)$, and applying (ii) finishes the proof.
\end{enumerate}
\end{proof}
\subsection{A criterion for coadmissibility}
Let $D=\mathcal{D}(X)$. A $D[f^{-1}]$-module $N$ is called a \textbf{meromorphic connection with singularities along $Z$} if it is finitely generated over $A[f^{-1}]$. We will also use the same terminology to refer to the corresponding $\mathcal{D}_X(*Z)$-module.\\
By \cite[Th\'eor\`eme 3.1.1]{Mebkhout}, $N$ is a finitely presented $D$-module, and thus
\begin{equation*}
\wideparen{N}:=\wideparen{\mathcal{D}}(X)\otimes_D N
\end{equation*} 
is a finitely presented, hence coadmissible $\wideparen{\mathcal{D}}(X)$-module. \\
\\
Let $\mathcal{M}$ be the integrable connection on $U$ determined by
\begin{equation*}
U_n\mapsto \mathcal{O}(U_n)\otimes_{A[f^{-1}]} N=\mathcal{D}(U_n)\otimes_{D[f^{-1}]} N.
\end{equation*} 
By \cite[Proposition 6.2]{Ardakov2}, this is a coadmissible $\wideparen{\mathcal{D}}_U$-module, with
\begin{equation*}
\mathcal{D}_m(U_n)\otimes_{\wideparen{\mathcal{D}}(U_n)} \mathcal{M}(U_n)\cong \mathcal{M}(U_n)
\end{equation*}
for any $m\geq n$.\\
Set $M:=\mathcal{M}(U)=\varprojlim \left(\mathcal{D}_n(U_n)\otimes_{D[f^{-1}]} N\right)=\wideparen{\mathcal{D}}(U)\otimes_{D[f^{-1}]} N$. In particular, $M$ is a finitely presented module over the Fr\'echet--Stein algebra $\wideparen{\mathcal{D}}(U)$. \\
\\
The restrictions $\mathcal{D}_n(X)\to \mathcal{D}_n(U_n)$ now induce $\mathcal{D}_n(X)$-module morphisms
\begin{equation*}
\theta_n: \mathcal{D}_n(X)\otimes_D N\to \mathcal{D}_n(U_n)\otimes_{D[f^{-1}]}N=\mathcal{D}_n(U_n)\otimes_{\wideparen{\mathcal{D}}(U)} M,
\end{equation*}
and taking the limit we obtain a morphism
\begin{equation*}
\theta: \wideparen{N}\to M
\end{equation*}
of $\wideparen{\mathcal{D}}(X)$-modules.\\
\\
Equipping $\mathcal{D}_n(X)\otimes_D N$ with the canonical $\mathcal{D}_n(X)$-Banach structure, and $\mathcal{D}_n(U_n)\otimes N$ with the canonical $\mathcal{D}_n(U_n)$-Banach structure, we see that $\theta_n$ is continuous, as any $\mathcal{D}_n(X)$-module morphism whose domain is a finitely generated Banach module is continuous.\\
Thus, equipping $\wideparen{N}$ with its canonical $\wideparen{\mathcal{D}}(X)$-topology and $M$ with its canonical $\wideparen{\mathcal{D}}(U)$-topology, the morphism $\theta$ is continuous.

\begin{lemma}
\label{equivtop}
If $M=\mathcal{M}(U)$ is coadmissible over $\wideparen{\mathcal{D}}(X)$ then its canonical $\wideparen{\mathcal{D}}(X)$-topology is equivalent to its canonical $\wideparen{\mathcal{D}}(U)$-topology.
\end{lemma}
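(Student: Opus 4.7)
\emph{Strategy.} Both $\tau_X$ and $\tau_U$ are Fr\'echet topologies on the underlying $K$-vector space $M$, being the canonical topologies of coadmissible modules over the Fr\'echet--Stein algebras $\wideparen{\mathcal{D}}(X)$ and $\wideparen{\mathcal{D}}(U)$ respectively (the $\wideparen{\mathcal{D}}(U)$-coadmissibility is already recorded in the discussion preceding the lemma). By the open mapping theorem for Fr\'echet spaces, to establish the equivalence of the two topologies it suffices to prove continuity of the identity map $\mathrm{id}\colon (M,\tau_X)\to (M,\tau_U)$; the reverse continuity is then automatic.

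The plan is to verify this continuity projection-by-projection. Recall that $\tau_U$ is the initial topology for the family of maps $M\to M_n:=\mathcal{D}_n(U_n)\otimes_{\wideparen{\mathcal{D}}(U)}M$, each $M_n$ carrying its canonical Banach topology as a finitely generated $\mathcal{D}_n(U_n)$-module. Writing $M'_n:=\mathcal{D}_n(X)\otimes_{\wideparen{\mathcal{D}}(X)} M$ for the corresponding Banach pieces of $\tau_X$, I will factor each projection as
\begin{equation*}
M\longrightarrow M'_n\longrightarrow \mathcal{D}_n(U_n)\widehat{\otimes}_{\mathcal{D}_n(X)}M'_n\longrightarrow M_n,
\end{equation*}
where the first arrow is the canonical projection for the $\wideparen{\mathcal{D}}(X)$-coadmissible structure, the second is the insertion $m'\mapsto 1\otimes m'$, and the third is the $\mathcal{D}_n(U_n)$-linear map induced on simple tensors by $d\otimes(c\otimes m)\mapsto dc\otimes m$ via the bounded algebra morphism $\mathcal{D}_n(X)\to \mathcal{D}_n(U_n)$.

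The first map is $\tau_X$-continuous by the very definition of $\tau_X$; the second is continuous as the canonical insertion of a Banach module into its completed base change along a bounded algebra homomorphism. Since $M'_n$ is finitely generated over the Noetherian Banach algebra $\mathcal{D}_n(X)$, the completed tensor product $\mathcal{D}_n(U_n)\widehat{\otimes}_{\mathcal{D}_n(X)}M'_n$ is a finitely generated Banach $\mathcal{D}_n(U_n)$-module (take a presentation of $M'_n$ and base change), so the third arrow is automatically continuous by the general principle, already invoked in the preceding subsection, that any module morphism with finitely generated Banach source is continuous. The step most prone to bookkeeping errors---and the only real point to check---is that this three-step composition truly coincides with the canonical projection $M\to M_n$; but this is immediate from unwinding the universal properties of the tensor products and the compatibility of the actions of $\wideparen{\mathcal{D}}(X)$ and $\wideparen{\mathcal{D}}(U)$ on $M$. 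Once this bookkeeping is in place, the open mapping theorem delivers $\tau_X=\tau_U$.
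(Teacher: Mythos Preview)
Your argument is correct and follows essentially the same route as the paper: both reduce to showing that the identity $(M,\tau_X)\to (M,\tau_U)$ is continuous and then invoke the Open Mapping Theorem, establishing the continuity level-by-level via the maps $M'_n\to M_n$ and the principle that module maps out of finitely generated Banach modules are continuous. Your detour through the intermediate object $\mathcal{D}_n(U_n)\widehat{\otimes}_{\mathcal{D}_n(X)}M'_n$ is harmless but unnecessary: the map $M'_n\to M_n$ is already $\mathcal{D}_n(X)$-linear (since $M_n$ becomes a locally convex $\mathcal{D}_n(X)$-module via the bounded restriction $\mathcal{D}_n(X)\to\mathcal{D}_n(U_n)$), so the same continuity principle applies directly, which is exactly how the paper argues.
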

\begin{proof}
Write $T_1$ for the canonical $\wideparen{\mathcal{D}}(X)$-topology on $M$, and $T_2$ for the canonical $\wideparen{\mathcal{D}}(U)$-topology. As the maps 
\begin{equation*}
\mathcal{D}_n(X)\otimes_{\wideparen{\mathcal{D}}(X)} M\to \mathcal{D}_n(U_n)\otimes_{\wideparen{\mathcal{D}}(U)} M
\end{equation*}
are continuous (since the left hand side is finitely generated over $\mathcal{D}_n(X)$ by assumption), passing to the limit shows that the identity map from $(M, T_1)$ to $(M, T_2)$ is a continuous bijection, so by the Open Mapping Theorem for Fr\'echet spaces (see \cite[Corollary 8.7]{SchneiderNFA}), the two topologies are equivalent.
\end{proof}
We recall the following definition.
\begin{definition}
A continuous morphism $\phi: M_1\to M_2$ of locally convex $K$-vector spaces is called \textbf{strict} if the induced morphism $\coim \phi\to \im \phi$ is a homeomorphism.
\end{definition}
If $M_1$ and $M_2$ are Fr\'echet, it follows from the Open Mapping Theorem that $\phi$ is strict if and only if $\im \phi$ is a closed subspace of $M_2$.

\begin{proposition}
\label{surjiffcoad}
The following are equivalent:
\begin{enumerate}[(i)]
\item The map $\theta$ is surjective.
\item The map $\theta$ is strict with respect to the canonical topologies on $\wideparen{N}$ and $M$.
\item $M$ is a coadmissible $\wideparen{\mathcal{D}}(X)$-module.
\item $M$ is a finitely generated $\wideparen{\mathcal{D}}(X)$-module.
\end{enumerate}
\end{proposition}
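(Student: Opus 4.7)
The plan is to establish the four conditions by closing up the cycle $(i) \Leftrightarrow (ii)$, $(i) \Leftrightarrow (iii)$, and $(i) \Leftrightarrow (iv)$, using two central ingredients: the Open Mapping Theorem for Fr\'echet spaces, applied to the continuous map $\theta$ between the Fr\'echet spaces $\wideparen{N}$ (Fr\'echet as a finitely presented, hence coadmissible, $\wideparen{\mathcal{D}}(X)$-module) and $M$ with its canonical $\wideparen{\mathcal{D}}(U)$-topology; and the density of the image of $N$ in $M$.

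First I would verify the density claim. Since $N$ is coherent over $A[f^{-1}]$, one checks that
\begin{equation*}
M_n = \mathcal{D}_n(U_n)\otimes_{D[f^{-1}]} N \cong \mathcal{O}(U_n)\otimes_{A[f^{-1}]} N,
\end{equation*}
and the density of $A[f^{-1}]$ in $\mathcal{O}(U_n)$ then forces the natural image of $N$ to be dense in each $M_n$, and hence in $M=\varprojlim M_n$.

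With density in hand, $(i)\Leftrightarrow(ii)$ is immediate: a continuous surjection of Fr\'echet spaces is strict by the Open Mapping Theorem, while conversely a strict $\theta$ has closed image which contains the dense image of $N$ and therefore equals $M$. For $(i)\Rightarrow(iii)$ and $(i)\Rightarrow(iv)$, one uses that $M$ is a quotient of the finitely presented, coadmissible $\wideparen{\mathcal{D}}(X)$-module $\wideparen{N}$, so both coadmissibility and finite generation transfer. For $(iii)\Rightarrow(i)$: if $M$ is coadmissible over $\wideparen{\mathcal{D}}(X)$, then $\im\theta$ is coadmissible as a quotient of $\wideparen{N}$, so it is closed in the canonical Fr\'echet topology of $M$; Lemma \ref{equivtop} identifies this topology with the $\wideparen{\mathcal{D}}(U)$-topology, in which $N$ lies dense, forcing $\im\theta=M$.

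The main obstacle is $(iv)\Rightarrow(iii)$ (equivalently $(iv)\Rightarrow(i)$). My plan is to choose a finite generating set $m_1,\ldots,m_s$ of $M$ and form the resulting $\wideparen{\mathcal{D}}(X)$-linear surjection $\pi\colon \wideparen{\mathcal{D}}(X)^s\twoheadrightarrow M$. This map is continuous when $M$ carries the $\wideparen{\mathcal{D}}(U)$-topology, since each $m_i$ defines a continuous map $\wideparen{\mathcal{D}}(X)\to \wideparen{\mathcal{D}}(U)\to M$; hence $\pi$ is strict with closed kernel $K$ by the Open Mapping Theorem. The delicate point is then to argue that $K$ is itself a coadmissible $\wideparen{\mathcal{D}}(X)$-module, so that $M$ becomes coadmissible as a quotient of coadmissible modules by a coadmissible submodule. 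This should follow from the (two-sided) Noetherianness of each $\mathcal{D}_n(X)$ together with the flatness of the transition maps $\mathcal{D}_{n+1}(X)\to \mathcal{D}_n(X)$: taking $K_n$ to be the image of $K$ in $\mathcal{D}_n(X)^s$ (which is automatically closed and finitely generated by Noetherianness), a short diagram chase on the short exact sequences $0\to K_n\to \mathcal{D}_n(X)^s\to \mathcal{D}_n(X)\otimes_{\wideparen{\mathcal{D}}(X)}M\to 0$ yields the coadmissibility axioms $\mathcal{D}_n(X)\otimes_{\mathcal{D}_{n+1}(X)} K_{n+1}\cong K_n$. Verifying this closed-submodule-is-coadmissible principle in our Fr\'echet--Stein setting is the key technical step of the proof; if a direct reference is unavailable, one writes it out by hand exploiting the Noetherian--flat structure of $\wideparen{\mathcal{D}}(X)$.
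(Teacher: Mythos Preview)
Your proof is correct and follows essentially the same route as the paper: density plus the Open Mapping Theorem for $(i)\Leftrightarrow(ii)$, quotienting for $(i)\Rightarrow(iii),(iv)$, Lemma~\ref{equivtop} for $(iii)\Rightarrow(i)$, and the continuity of $\wideparen{\mathcal{D}}(X)^s\to M$ via the factorization through $\wideparen{\mathcal{D}}(U)^s$ for $(iv)\Rightarrow(iii)$. The ``key technical step'' you isolate---that a closed submodule of a finitely generated free module over a Fr\'echet--Stein algebra is coadmissible---is precisely \cite[Lemma~3.6]{Schneider}, which the paper simply cites rather than reproving; your sketched argument with the $K_n$ and flatness is exactly how Schneider--Teitelbaum establish it.
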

\begin{proof}
As already mentioned, the continuous maps $\theta_n: \mathcal{D}_n(X)\otimes N\to \mathcal{D}_n(U_n)\otimes N$ ensure that $\theta$ is always continuous with respect to the canonical Fr\'echet topologies.\\
So $\theta$ is strict if and only if its image is closed by the Open Mapping Theorem, but as the image of $N$ is dense in $M$, this happens if and only if $\theta$ is surjective. So (i) is equivalent to (ii).\\
\\
If $\theta$ is a surjection, this realizes $M$ as the quotient of a finitely presented $\wideparen{\mathcal{D}}(X)$-module by a closed submodule, so that $M$ is coadmissible over $\wideparen{\mathcal{D}}(X)$ by \cite[Lemma 3.6]{Schneider}. Thus (i) implies (iii) and (iv).\\
\\
Conversely, if $M$ is coadmissible, the topology on $M$ agrees with its canonical topology as a $\wideparen{\mathcal{D}}(X)$-module by Lemma \ref{equivtop}. It follows from the remark after \cite[Lemma 3.6]{Schneider} that $\theta$ is strict, so (iii) implies (ii).\\
\\
If $M$ is finitely generated over $\wideparen{\mathcal{D}}(X)$, the surjection $\wideparen{\mathcal{D}}(X)^{\oplus r}\to M$ factors through $\wideparen{\mathcal{D}}(U)^{\oplus r}$ and hence is continuous: the restriction $\wideparen{\mathcal{D}}(X)\to \wideparen{\mathcal{D}}(U)$ is naturally continuous, but any map of coadmissible $\wideparen{\mathcal{D}}(U)$-modules is also continuous, again by the remark after \cite[Lemma 3.6]{Schneider}. Thus $M$ is the quotient of a finitely presented $\wideparen{\mathcal{D}}(X)$-module by a closed submodule and hence coadmissible by \cite[Lemma 3.6]{Schneider}. Thus (iv) implies (iii).\\
\\
To summarize, (i) is equivalent to (ii), (i) implies (iii) and (iii) implies (ii), so the first three statements are equivalent. Moreover, (i) implies (iv) and (iv) implies (iii), finishing the proof.
\end{proof}
We note that this argument also implies that the image of $\theta$ is always a coadmissible submodule of $M$ which is dense with respect to the canonical $\wideparen{\mathcal{D}}(U)$-topology.
\subsection{Extension and localization}
Recall that a finitely presented module over a Fr\'echet--Stein algebra is coadmissible. For any smooth rigid analytic $K$-variety $Y$, we can thus define the extension functor
\begin{align*}
E_Y: \{\mathrm{coherent} \ \mathcal{D}_Y\mathrm{-modules}\} &\to \{\mathrm{coadmissible} \ \wideparen{\mathcal{D}}_Y\mathrm{-modules}\}\\
\mathcal{M}& \mapsto \wideparen{\mathcal{D}}_Y\otimes_{\mathcal{D}_Y} \mathcal{M}
\end{align*}
which is exact by \cite[Lemma 4.14]{Bode}.\\
We can view Proposition \ref{surjiffcoad} in terms of this extension functor and the usual restriction and direct image functors: $N$ localizes to a $\mathcal{D}_X(*Z)$-module $\mathcal{N}$ on $X$ which is a coherent $\mathcal{D}_X$-module (see \cite{Mebkhout}). The map $\theta$ is then the morphism $(E_X\mathcal{N})(X)\to (j_*E_Uj^*\mathcal{N})(X)$. We will often be concerned with modules for which this is an isomorphism.

\begin{proposition}
\label{localiso}
Let $N$ be a meromorphic connection with singularities along $Z$, and assume the conditions in Proposition \ref{surjiffcoad} are satisfied. Let $\mathcal{M}=E_Uj^*\mathcal{N}$ be the sheaf given by
\begin{equation*}
Y\mapsto \wideparen{\mathcal{D}}(Y)\otimes_{D[f^{-1}]} N
\end{equation*}
for $Y$ affinoid, as discussed above. Then $j_*\mathcal{M}$ is a coadmissible $\wideparen{\mathcal{D}}_X$-module.
\end{proposition}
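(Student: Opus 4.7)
The plan is to verify directly that $j_*\mathcal{M}$ is a coadmissible $\wideparen{\mathcal{D}_X}$-module by checking the two conditions in the definition against the trivial admissible affinoid covering $\{X\}$ of $X$. Condition (i) — that $(j_*\mathcal{M})(X) = \mathcal{M}(U) = M$ is coadmissible over $\wideparen{\mathcal{D}}(X)$ — holds by hypothesis via Proposition \ref{surjiffcoad}.

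The substantial work is then to verify condition (ii): for every affinoid subdomain $Y = \Sp B$ of $X$, the canonical comparison morphism
\begin{equation*}
\wideparen{\mathcal{D}_X}(Y) \widehat{\otimes}_{\wideparen{\mathcal{D}}(X)} M \to \mathcal{M}(Y \cap U) = (j_*\mathcal{M})(Y)
\end{equation*}
should be an isomorphism. My approach is to identify both sides with $B \widehat{\otimes}_A M$. For the left-hand side, Proposition \ref{Dcapoverstein}(i) gives $\wideparen{\mathcal{D}_X}(Y) \cong B \widehat{\otimes}_A \wideparen{\mathcal{D}_X}(X)$, so combining associativity of the completed tensor product (Lemma \ref{assctp}) with Lemma \ref{ctpprops}(i) yields $\wideparen{\mathcal{D}_X}(Y) \widehat{\otimes}_{\wideparen{\mathcal{D}}(X)} M \cong B \widehat{\otimes}_A M$. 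For the right-hand side, $\mathcal{M}$ is a coadmissible $\wideparen{\mathcal{D}_U}$-module by the discussion preceding Proposition \ref{surjiffcoad}, so Proposition \ref{Dcapoverstein}(iv) directly provides $\mathcal{M}(Y \cap U) \cong B \widehat{\otimes}_A M$.

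The main obstacle is to check that these two identifications compose to the canonical comparison map on sections, rather than to some other $B$-linear isomorphism of $B \widehat{\otimes}_A M$. This reduces to a diagram chase: the canonical map factors as
\begin{equation*}
\wideparen{\mathcal{D}_X}(Y) \widehat{\otimes}_{\wideparen{\mathcal{D}}(X)} M \to \wideparen{\mathcal{D}}(Y \cap U) \widehat{\otimes}_{\wideparen{\mathcal{D}}(X)} M \to \wideparen{\mathcal{D}}(Y \cap U) \widehat{\otimes}_{\wideparen{\mathcal{D}}(U)} M \cong \mathcal{M}(Y \cap U),
\end{equation*}
and each of these arrows is compatible with the $B \widehat{\otimes}_A M$ presentation thanks to functoriality of $\widehat{\otimes}$ in the canonical morphisms $A \to B$ and $\wideparen{\mathcal{D}_X}(X) \to \wideparen{\mathcal{D}}(U) \to \wideparen{\mathcal{D}}(Y \cap U)$, together with the fact that the $\wideparen{\mathcal{D}}(U)$-action on $M$ restricts along $\wideparen{\mathcal{D}}(X) \to \wideparen{\mathcal{D}}(U)$ to the structural $\wideparen{\mathcal{D}}(X)$-action used to form the left-hand side. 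Once this compatibility is spelled out, condition (ii) follows, and $j_*\mathcal{M}$ is coadmissible.
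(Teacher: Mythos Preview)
Your overall strategy is exactly the paper's: check the coadmissibility conditions against the trivial covering $\{X\}$, reduce condition (ii) to identifying both sides with $B\widehat{\otimes}_A M$, and invoke Proposition~\ref{Dcapoverstein} for those identifications. However, there is a genuine gap.

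The completed tensor product $B\widehat{\otimes}_A M$ depends on the topology carried by $M$. On the left-hand side, condition (ii) in the definition of coadmissibility explicitly requires $M=(j_*\mathcal{M})(X)$ to carry its canonical $\wideparen{\mathcal{D}}(X)$-topology, and your identification via Proposition~\ref{Dcapoverstein}(i) and Lemma~\ref{ctpprops}(i) produces $B\widehat{\otimes}_A M$ with that topology. On the right-hand side, Proposition~\ref{Dcapoverstein}(iv) applies to $\mathcal{M}$ as a coadmissible $\wideparen{\mathcal{D}_U}$-module and yields $B\widehat{\otimes}_A M$ with $M$ in its canonical $\wideparen{\mathcal{D}}(U)$-topology. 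These are, a priori, two different Fr\'echet completions of $B\otimes_A M$, and your diagram chase only shows that the canonical comparison map is the map induced by the continuous identity $(M,T_1)\to(M,T_2)$; it does not show this map is a homeomorphism.

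The missing ingredient is precisely Lemma~\ref{equivtop}: under the hypothesis that $M$ is coadmissible over $\wideparen{\mathcal{D}}(X)$, the two topologies on $M$ coincide, so the two instances of $B\widehat{\otimes}_A M$ agree and the comparison map is an isomorphism. Once you insert this step, your argument matches the paper's proof verbatim; without it, the compatibility check you describe does not suffice.
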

\begin{proof}
We have seen in Proposition \ref{surjiffcoad} that $j_*\mathcal{M}(X)=\mathcal{M}(U)$ is a coadmissible $\wideparen{\mathcal{D}}(X)$-module, so it remains to show that the natural morphism
\begin{equation*}
\wideparen{\mathcal{D}}(Y)\wideparen{\otimes}_{\wideparen{\mathcal{D}}(X)} \mathcal{M}(U)\to \mathcal{M}(Y\cap U)
\end{equation*}
is an isomorphism for any affinoid subdomain $Y=\Sp B$ of $X$.\\
By Proposition \ref{Dcapoverstein}, the left hand side is isomorphic to $B\widehat{\otimes}_A \mathcal{M}(U)$, where $\mathcal{M}(U)$ is equipped with the canonical $\wideparen{\mathcal{D}}(X)$-topology, and the right hand side is isomorphic to $B\widehat{\otimes}_A \mathcal{M}(U)$, where $\mathcal{M}(U)$ is equipped with the canonical $\wideparen{\mathcal{D}}(U)$-topology. The desired isomorphism thus follows from Lemma \ref{equivtop}.
\end{proof}

\section{Numbers of positive type}
In many explicit calculations in what follows, it will be crucial to distinguish between scalars which are of positive type and those of type zero.
\begin{definition}
\label{defpostype}
Let $\lambda \in K$. We say $\lambda$ is \textbf{of positive type} if $\lambda \in \mathbb{Z}_{\geq 0}$ or if $\lambda \notin \mathbb{Z}_{\geq 0}$ and there exists some integer $r$ such that
\begin{equation*}
\frac{\pi^{ir}}{\prod_{j=0}^{i-1}(\lambda-j)}\to 0 \ \mathrm{as} \ i\to \infty.
\end{equation*}
If $\lambda$ is not of positive type, we say it is of type zero.
\end{definition}
We show in Appendix B that our notion of positive type is equivalent to the one in \cite[Definition 13.1.1]{Kedlaya}. In particular, this implies the following:
\begin{enumerate}[(i)]
\item If $\lambda \in K$ with $|\lambda|>1$, then $\lambda$ is of positive type, as $|\lambda-j|=|\lambda|$ for each $j$.
\item Any integer is of positive type (see \cite[Proposition 13.1.5]{Kedlaya}).
\item $\lambda$ is of positive type if and only if $\lambda -n$ is for some $n\in \mathbb{Z}$.
\end{enumerate}

\begin{example}
Note that there exist numbers which are not of positive type (this example is due to Le Bras and was communicated to us by Ardakov):\\
For convenience, let $K=\mathbb{Q}_p$. Set $k_1=p$ and define inductively $k_{n+1}=p^{2k_n}$ for $n>1$. Let
\begin{equation*}
\lambda=\sum_{i=1}^{\infty} p^{k_i}
\end{equation*}
and denote by $m_j$ the partial sum $m_j=\sum_{i=1}^j p^{k_i}$. So in particular, 
\begin{equation*}
|\lambda-m_j|=|p|^{k_{j+1}}=|p|^{p^{2k_j}}.
\end{equation*}
But now for any integer $r$,
\begin{equation*}
\left|\frac{p^{rm_j}}{\lambda-m_j}\right|=|p|^{rm_j-k_{j+1}}.
\end{equation*}
But as $rm_j=r\cdot \sum_{i=1}^j p^{k_i}$ and $k_{j+1}=p^{2k_j}$, the absolute values above tend to infinity as $j$ tends to infinity. Thus $\mathrm{type}(\lambda)=0$ (in the sense of Kedlaya, Definition \ref{kedlayadef}), and Lemma \ref{kedlayaseries} implies that $\lambda$ can't be of positive type.
\end{example}

\section{Extensions of meromorphic connections}
We now return to the setup of section 2, so that $X=\Sp A$ is a smooth affinoid with free tangent sheaf $\mathcal{T}_X$, $\mathcal{A}\subset A$ is an affine formal model, and $U\subseteq X$ is the non-vanishing set of some non-constant $f\in \mathcal{A}$. We write $D=\mathcal{D}(X)$, and let $N$ be a $D[f^{-1}]$-module which is finitely generated over $A[f^{-1}]$.\\
We let $\partial_1, \dots, \partial_a$ be a free generating set of the Lie lattice $\mathcal{L}$ inside $L=\mathcal{T}_X(X)$.\\
\\
Let $m_1, \dots, m_k$ be a finite generating set of $N$ as a $A[f^{-1}]$-module. Then by \cite[Th\'eor\`eme 3.1.1]{Mebkhout}, there exists $P_i(s)\in D[s]$ and monic $b_i(s)\in K[s]$ such that
\begin{equation*}
P_i(s) f^{-s}m_i=b_i(s)f^{-s-1} m_i
\end{equation*}  
for each $i=1, \dots, k$. \\
\\
Replacing $m_i$ by $f^{-r}m_i$ for some $r$, we will always assume that $b_i(s)\neq 0$ for any $s\in \mathbb{Z}_{\geq 0}$. In particular,
\begin{equation*}
f^{-j}m_i=\frac{P_i(j-1)\cdot \dots \cdot P_i(0)}{\prod_{s=0}^{j-1} b_i(s)} m_i
\end{equation*}
for any $j\geq 0$, $i=1, \dots, k$, and the $m_i$ form a finite generating set of $N$ as a $D$-module.
\begin{lemma}
\label{boundedprod}
Suppose that all the roots of $b_i$ (in an algebraic closure of $K$) are of positive type. Then for any $n\geq 0$, there exists some positive integer $r$ such that
\begin{equation*}
\pi^{jr} \frac{P_i(j-1)\dots P_i(0)}{\prod_{s=0}^{j-1} b_i(s)}\in U(\pi^n\mathcal{L})
\end{equation*}
for any $j\geq 1$.
\end{lemma}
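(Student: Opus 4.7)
My plan is to split the expression into a differential-operator factor $\pi^{jr_1}P_i(j-1)\cdots P_i(0)$ and a scalar factor $\pi^{jr_2}/\prod_{s=0}^{j-1}b_i(s)$, arrange that the first lies in the subring $U(\pi^n\mathcal{L})\subseteq D$ and the second has bounded norm, and then absorb the residual bound with one further power $\pi^{r_3}$. Since $U(\pi^n\mathcal{L})$ is a subring closed under scalar multiplication by $R$, the conclusion will follow by setting $r=r_1+r_2+r_3$.

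For the operator part, I write $P_i(s)=\sum_{\underline{\alpha}}g_{\underline{\alpha}}(s)\partial^{\underline{\alpha}}$ with $g_{\underline{\alpha}}(s)\in A[s]$ ranging over a finite set of multi-indices. Because $|j|_K\leq 1$ for every integer $j$, the norms $|g_{\underline{\alpha}}(j)|$ are uniformly bounded as $j$ varies in $\mathbb{Z}_{\geq 0}$. This lets me pick a single $r_1$ large enough that $\pi^{r_1}P_i(j)\in U(\pi^n\mathcal{L})$ for every $j\geq 0$; multiplicativity of the subring then gives $\pi^{jr_1}P_i(j-1)\cdots P_i(0)\in U(\pi^n\mathcal{L})$ for all $j\geq 1$.

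For the scalar part---the genuinely substantive step---I factor $b_i$ over an algebraic closure $\overline{K}$ equipped with the extended norm as $b_i(s)=\prod_\mu(s-\mu)$. By hypothesis each root $\mu$ is of positive type, so by Definition \ref{defpostype} there exists $r_\mu$ with $\pi^{ir_\mu}/\prod_{s=0}^{i-1}(\mu-s)\to 0$, and in particular this sequence is bounded in $i$. Setting $r_2:=\sum_\mu r_\mu$ (summing over roots with multiplicity) and multiplying these bounded sequences together (up to a sign) yields a uniform constant $C$ such that $|\pi^{jr_2}/\prod_{s=0}^{j-1}b_i(s)|\leq C$ for all $j\geq 1$. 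The quantity is Galois-invariant and thus lies in $K$, even though the individual factors need not.

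To conclude, I choose $r_3\geq 0$ with $|\pi|^{r_3}C\leq 1$ and set $r=r_1+r_2+r_3$: grouping the extra $\pi^{jr_3}$ with the scalar makes it an element of $R$ for every $j\geq 1$, while the operator factor remains in $U(\pi^n\mathcal{L})$, and the product lies in $U(\pi^n\mathcal{L})$ as claimed. The only non-routine point, and the sole place where the positive-type hypothesis is used, is the scalar estimate of the third paragraph; the rest is uniform rescaling.
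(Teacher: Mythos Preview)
Your proof is correct and follows essentially the same approach as the paper: both split the expression into the operator part $P_i(j-1)\cdots P_i(0)$ (bounded uniformly in $j$ because $|j|\leq 1$ forces $|P_i(j)|\leq|P_i(s)|$) and the scalar part $1/\prod_s b_i(s)$ (handled root by root via the positive-type hypothesis), then combine the two rescalings. The only cosmetic differences are that the paper first reduces to $n=0$ by replacing $\mathcal{L}$ with $\pi^n\mathcal{L}$ and phrases the operator bound via a submultiplicative norm on $D[s]$, whereas you work directly at level $n$ and expand $P_i(s)$ in coordinates; the paper also folds your $r_3$ into its $r''$ rather than tracking it separately.
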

\begin{proof}
Replacing $\mathcal{L}$ by $\pi^n\mathcal{L}$, it is enough to treat the case $n=0$.\\
Then the Lie lattice $\mathcal{L}$ determines a submultiplicative norm on $D=U_A(L)$, with unit ball $U(\mathcal{L})$, and we are required to show that the given elements have norm less than or equal to $1$.\\
Setting $|s|=1$ for the formal parameter $s$ extends the norm to a norm on $D[s]$ in such a way that for any integer $j\in \mathbb{Z}$, the evaluation map $D[s]\to D$ sending $s$ to $j$ is contracting, i.e. bounded of norm $\leq 1$ (by the triangle inequality, as $|j|\leq 1$). Therefore,
\begin{equation*}
|P_i(j)| \leq |P_i(s)|
\end{equation*}
for any $j\in \mathbb{Z}$.\\
Let $r'\in \mathbb{Z}$ such that $|P_i(s)|\leq |\pi|^{r'}$, then the above shows that
\begin{equation*}
\left|P_i(j-1)\cdot \dots \cdot P_i(0)\right| \leq |\pi|^{jr'}
\end{equation*}
for any $j\geq 1$.\\
\\
Now let $\lambda_1, \dots, \lambda_d$ be the roots of $b_i(s)$, with multiplicity, so that
\begin{equation*}
b_i(s)=(s-\lambda_1) \dots (s-\lambda_d),
\end{equation*}
and hence
\begin{equation*}
\prod_{s=0}^{j-1} b_i(s)= \prod_{s=0}^{j-1} (s-\lambda_1)\dots\prod _{s=0}^{j-1}(s-\lambda_d).
\end{equation*}
For any $t\in \{1, \dots, d\}$, $\lambda_t$ is of positive type by assumption, so there exists some $r''_t\geq 0$ such that
\begin{equation*}
\frac{\pi^{jr''_t}}{\prod_{s=0}^{j-1}(s-\lambda_t)}\to 0
\end{equation*}
as $j\to \infty$. Thus
\begin{equation*}
\frac{\pi^{jr''}}{\prod_{s=0}^{j-1} (s-\lambda_1)\cdot \dots \cdot \prod_{s=0}^{j-1} (s-\lambda_d)}\to 0
\end{equation*} 
as $j\to \infty$ for $r''=r''_1+\dots+r''_d$. In particular the sequence is bounded, and replacing $r''$ by a suitable larger integer, we can assume that these terms have norm less than or equal to $1$ for each $j\geq 1$.
\\
Then any $r\geq r''-r'$ has the desired property.
\end{proof}

\begin{theorem}
\label{generalextn}
Suppose that for each $i$, all the roots of $b_i$ (in an algebraic closure of $K$) are of positive type. Then $\wideparen{\mathcal{D}}(U)\otimes_{D[f^{-1}]} N$ is a coadmissible $\wideparen{\mathcal{D}}(X)$-module, and the morphism $\theta: \wideparen{\mathcal{D}}(X)\otimes_{D} N\to \wideparen{\mathcal{D}}(U)\otimes_{D[f^{-1}]} N$ is an isomorphism.
\end{theorem}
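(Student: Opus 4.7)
By Proposition \ref{surjiffcoad}, it suffices to exhibit an inverse to $\theta$, since surjectivity alone already implies coadmissibility of $M$. My plan is to construct a continuous inverse $\phi: M \to \wideparen{N}$ using the $b$-function identities to convert poles in $f$ into convergent series of differential operators, and then to verify that $\theta$ and $\phi$ are mutually inverse by computing on the dense $D$-submodule $N$ common to both.

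The crucial input is the $b$-function identity $f^{-j} m_i = Q_{i,j} m_i$ in $N$, where
\begin{equation*}
Q_{i,j} := \frac{P_i(j-1) \cdots P_i(0)}{\prod_{s=0}^{j-1} b_i(s)} \in D.
\end{equation*}
Lemma \ref{boundedprod} says that for every $m \geq 0$ there exists $r = r(m)$ with $\pi^{jr}Q_{i,j} \in U(\pi^m \mathcal{L})$, so the norm of $Q_{i,j}$ in $\mathcal{D}_m(X)$ is bounded by $|\pi|^{-jr}$. On the other hand, any element of $M = \varprojlim_n \mathcal{O}(U_n) \otimes_{A[f^{-1}]} N$ is represented, via the Tate-type description $\mathcal{O}(U_n) \cong A\langle \pi^n f^{-1}\rangle$ and the generators $m_i$, by a coherent system of formal sums $\sum_{i,j} c_{i,j} f^{-j} m_i$ whose coefficients $c_{i,j} \in A$ decay faster than $|\pi|^{jk}$ for every $k \geq 0$, this being precisely the condition for the sum to lie in $\mathcal{O}(U_n)$ for every $n$. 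I would then define $\phi$ by the substitution $f^{-j} m_i \mapsto Q_{i,j} m_i$, sending $\sum_{i,j} c_{i,j} f^{-j} m_i$ to $\sum_{i,j} c_{i,j} Q_{i,j} m_i \in \wideparen{N}$; convergence in $\mathcal{D}_m(X) \otimes_D N$ for each $m$ then follows from the estimate $|c_{i,j}| \cdot |\pi|^{-jr(m)} \to 0$, which holds by the exponential decay of $c_{i,j}$.

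Once the formula is established, both $\theta \circ \phi$ and $\phi \circ \theta$ act as the identity on the image of $N$ by the very $b$-function relation. Since $N$ has dense image in $\wideparen{N}$ (as $D$ is dense in $\wideparen{\mathcal{D}}(X)$) and in $M$ (as $D[f^{-1}]$ is dense in $\wideparen{\mathcal{D}}(U)$), continuity extends these identities to all of $M$ and $\wideparen{N}$ respectively. Proposition \ref{surjiffcoad} then delivers coadmissibility of $M$ as a by-product of the isomorphism.

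The principal obstacle I anticipate is the well-definedness of $\phi$: the representation of an element of $\mathcal{O}(U_n) \cong A\langle T\rangle/(fT - \pi^n)$ as a power series in $T = \pi^n f^{-1}$ is not unique in general, so one must verify that the substitution $f^{-j}m_i \mapsto Q_{i,j}m_i$ descends modulo the defining relation $fT = \pi^n$. This reduces to checking the identity $f \cdot Q_{i,j}m_i = Q_{i,j-1}m_i$ inside $\wideparen{N}$, which is a direct consequence of the $b$-function recursion in $N$ together with the injectivity of $N \to \wideparen{N}$; the delicate part is arranging the estimates uniformly so as to respect the Fr\'echet--Stein limit structure throughout.
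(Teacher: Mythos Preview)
Your approach is correct in spirit and relies on the same key estimate (Lemma~\ref{boundedprod}) as the paper, but the packaging is different and introduces complications that the paper's organization avoids.

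The paper does not construct an explicit inverse on the completions. Instead, it equips $N$ itself with two locally convex topologies: $T_1$, the quotient topology from the surjection $D^k \to N$ via the $m_i$, and $T_2$, the quotient topology from $D[f^{-1}]^k \to N$. The Hausdorff completions of $(N,T_1)$ and $(N,T_2)$ are precisely $\wideparen{N}$ and $M$, and $\theta$ is the map induced by the identity on $N$. One direction of continuity is trivial; the other is the statement that for every $n$ there is $m$ with $U(\pi^m\mathcal{L})[\pi^m f^{-1}]m_i \subseteq U(\pi^n\mathcal{L})m_i$, which is exactly the estimate from Lemma~\ref{boundedprod}. Since the identity is then a homeomorphism $(N,T_1)\to(N,T_2)$, the completions coincide and $\theta$ is an isomorphism. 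No explicit formula for the inverse is ever written down, and no well-definedness question arises because the map being completed is literally $\mathrm{id}_N$.

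Your route, by contrast, writes down $\phi$ via a choice of representation $\sum_{i,j} c_{i,j}f^{-j}m_i$, and this creates two sources of ambiguity: the relation $fT=\pi^n$ in $A\langle T\rangle$, which you identify, and the $A[f^{-1}]$-module relations among the $m_i$ in $N$, which you do not mention. Both can be handled---the second by noting that any finite relation $\sum_i a_i m_i = 0$ with $a_i=\sum_j a_{i,j}f^{-j}$ gives $\sum_{i,j}a_{i,j}Q_{i,j}m_i = \sum_i a_i m_i = 0$ in $N$, and then extending to infinite $\mathcal{O}(U)$-combinations by continuity of $\phi$ on the coefficient space. But observe that this very continuity argument, unwound, is the statement that $(N,T_2)\to(N,T_1)$ is continuous: you are re-deriving the paper's lemma in disguise, with extra bookkeeping. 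Your density argument for $\theta\circ\phi=\mathrm{id}$ and $\phi\circ\theta=\mathrm{id}$ is then correct once $\phi$ is known to be continuous. The paper's framing simply short-circuits all of this by staying on $N$.
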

To prove Theorem \ref{generalextn}, we establish the following terminology, similar to Lemma \ref{equivtop}. The module $N$ comes equipped with two different locally convex topologies. Firstly, the surjection $D^k\to N$ obtained from the generating set $\{m_i\}$ above induces the quotient topology, which can be seen as being induced by the semi-norms with unit balls 
\begin{equation*}
\sum_i U(\pi^{n_i}\mathcal{L})m_i
\end{equation*}
for various $n_i\geq 0$. We call this topology $T_1$.\\
Secondly, we can consider in the same way the surjection $D[f^{-1}]^k\to N$, giving a topology $T_2$ induced by semi-norms with unit balls
\begin{equation*}
\sum_i U(\pi^{n_i}\mathcal{L})[\pi^{n_i}f^{-1}]m_i.
\end{equation*}
Note that the completion of $(N, T_1)$ is $\wideparen{N}=\varprojlim (\mathcal{D}_n(X)\otimes_D N)=\wideparen{\mathcal{D}}(X)\otimes_D N$, and the completion of $(N, T_2)$ is $M=\wideparen{\mathcal{D}}(U)\otimes_{D[f^{-1}]}N$.\\
\\
It is now clear from the definition that the identity map $(N, T_1)\to (N, T_2)$ is continuous. The result will follow straightforwardly once we have established strictness.
\begin{lemma}
Suppose that for each $i$, all the roots of $b_i$ are of positive type. Then the morphism
\begin{align*}
\phi: & D^k\to N\\
& (\xi_i)\mapsto \sum_i \xi_im_i
\end{align*}
is a strict surjection when $N$ is equipped with the topology $T_2$.\\
In particular, $T_1$ is equivalent to $T_2$.
\end{lemma}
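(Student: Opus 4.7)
The plan is to prove $T_1 = T_2$ as topologies on $N$, after which the strict surjectivity of $\phi$ with respect to $T_2$ becomes essentially formal: the quotient topology on $D^k/\ker\phi \cong N$ induced from the Fréchet topology on $D^k$ (with basic unit balls $\prod_i U(\pi^{n_i}\mathcal{L})$) is by construction $T_1$, so $\phi$ is a strict surjection onto $(N,T_2)$ precisely when $T_1 = T_2$. Surjectivity itself is already in hand, since the $b$-function relation
\begin{equation*}
f^{-j}m_i = \frac{P_i(j-1)\cdots P_i(0)}{\prod_{s=0}^{j-1}b_i(s)}\, m_i
\end{equation*}
places every $f^{-j}m_i$ in $Dm_i$, showing that the $m_i$ generate $N$ as a $D$-module.

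One direction of the topology comparison is immediate from the definitions: at each level $n$ the $T_1$-unit-ball $\sum_i U(\pi^n\mathcal{L}) m_i$ is contained in the $T_2$-unit-ball $\sum_i U(\pi^n\mathcal{L})[\pi^n f^{-1}] m_i$, so the identity $(N,T_1) \to (N,T_2)$ is continuous. The substance of the lemma lies in the reverse inclusion.

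To show that $T_2$ is also finer than $T_1$, fix a basic $T_1$-neighbourhood $\sum_i U(\pi^n\mathcal{L}) m_i$ of $0$. I will exhibit an integer $m$ (depending on $n$) such that $U(\pi^m\mathcal{L})[\pi^m f^{-1}] m_i \subseteq U(\pi^n\mathcal{L})m_i$ for each $i$. A typical element of $U(\pi^m\mathcal{L})[\pi^m f^{-1}]\,m_i$ is a finite sum of terms $\eta\cdot\pi^{mj}f^{-j}m_i$ with $\eta\in U(\pi^m\mathcal{L})$ and $j\geq 0$. Using the $b$-function relation,
\begin{equation*}
\pi^{mj}f^{-j}m_i = \pi^{mj}\,\frac{P_i(j-1)\cdots P_i(0)}{\prod_{s=0}^{j-1}b_i(s)}\, m_i,
\end{equation*}
and by Lemma \ref{boundedprod} applied at the target level $n$, for each $i$ there exists an integer $r_i$ such that $\pi^{jr_i}\frac{P_i(j-1)\cdots P_i(0)}{\prod_{s=0}^{j-1}b_i(s)} \in U(\pi^n\mathcal{L})$ for all $j\geq 1$. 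Choosing $m\geq \max(n,r_1,\ldots,r_k)$ guarantees simultaneously that $\pi^{mj}f^{-j}m_i \in U(\pi^n\mathcal{L})m_i$ and that $\eta\in U(\pi^m\mathcal{L}) \subseteq U(\pi^n\mathcal{L})$, whence $\eta\cdot\pi^{mj}f^{-j}m_i$ again lies in $U(\pi^n\mathcal{L})m_i$ since $U(\pi^n\mathcal{L})$ is a ring. Summing finitely many such terms shows the desired inclusion.

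The main obstacle has in fact already been overcome in Lemma \ref{boundedprod}, where the hypothesis that the roots of the $b_i$ are of positive type is translated into the crucial norm estimate on $\frac{P_i(j-1)\cdots P_i(0)}{\prod_{s=0}^{j-1}b_i(s)}$; the present argument is then a bookkeeping step verifying that the extra factors of $\pi^m$ attached to $f^{-1}$ in the $T_2$-topology suffice to absorb both this growth and the coefficients $\eta$ appearing in the expansion. Once strictness of $\phi$ is established, the identity $(N,T_1)\to (N,T_2)$ is an open continuous bijection, hence a homeomorphism, which gives the asserted equivalence of $T_1$ and $T_2$.
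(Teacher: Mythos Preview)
Your proof is correct and follows essentially the same approach as the paper's: both reduce to showing that for each $n$ there is an $m$ with $U(\pi^m\mathcal{L})[\pi^m f^{-1}]m_i \subseteq U(\pi^n\mathcal{L})m_i$, and both obtain this inclusion by substituting the $b$-function relation for $f^{-j}m_i$ and invoking Lemma~\ref{boundedprod}. The only cosmetic differences are that the paper takes $m = n + r$ rather than $m \geq \max(n, r_i)$ and writes out the PBW expansion explicitly via Rinehart's theorem, while you phrase the same computation as ``$\eta \cdot \pi^{mj} f^{-j} m_i$ with $\eta \in U(\pi^m\mathcal{L})$''.
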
 
\begin{proof}
As we have already determined that the map is a continuous surjection, we want to establish that it is also open.\\
Fix $i$, and let $n\geq 0$. It now suffices to show that $\phi((0, \dots, 0, U(\pi^n\mathcal{L}), 0, \dots, 0))$, which is just $U(\pi^n\mathcal{L})\cdot m_i$, contains a set of the form $U(\pi^m\mathcal{L})[\pi^mf^{-1}]m_i$ for some $m\geq 0$.\\
\\
Now by Lemma \ref{boundedprod}, there exists some positive integer $r$ such that
\begin{equation*}
\pi^{jr} \frac{P_i(j-1) \cdot \dots \cdot P_i(0)}{\prod_{s=0}^{j-1} b_i(s)}\in U(\pi^n\mathcal{L})
\end{equation*}
for any $j\geq 0$.\\
\\
In particular, if $m=n+r$ and $x\in U(\pi^m\mathcal{L})[\pi^mf^{-1}]m_i$, we can write
\begin{equation*}
x=\left(\sum_{\underline{i}\in \mathbb{N}^a, j\geq 0} \pi^{m(|\underline{i}|+j)}b_{\underline{i}, j} \partial^{\underline{i}}  f^{-j}\right) m_i
\end{equation*}
by \cite[Theorem 3.1]{Rinehart}, where $b_{\underline{i}, j}\in \mathcal{A}$ and $\partial^{\underline{i}}=\partial_1^{i_1}\dots \partial_a^{i_a}\in U_{\mathcal{A}}(\mathcal{L})$ by definition. Hence
\begin{equation*}
x= \left(\sum_{\underline{i}, j} b_{\underline{i}, j}\partial^{\underline{i}} \pi^{m(|\underline{i}|+j)} \frac{P_i(j-1)\dots P_i(0)}{\prod_{s=0}^{j-1} b_i(s)} \right) m_i.
\end{equation*}
Now by our choice of $r$ 
\begin{equation*}
\pi^{(n+r)|i|+nj}b_{\underline{i}, j}\partial^{\underline{i}}\cdot \pi^{jr} \frac{P_i(j-1)\dots P_i(0)}{\prod_{s=0}^{j-1} b_i(s)} \in U(\pi^n\mathcal{L}) 
\end{equation*}
for all $\underline{i}\in \mathbb{N}^a$ and all $j\geq 0$, and thus $x\in U(\pi^n\mathcal{L})\cdot m_i$, as required.
\end{proof}

\begin{proof}[Proof of Theorem \ref{generalextn}]
By the above, the identity morphism $(N, T_1)\to (N, T_2)$ is an isomorphism of locally convex vector spaces. Thus their completions are isomorphic, i.e. $\theta$ is an isomorphism and $M$ is a coadmissible $\wideparen{\mathcal{D}}(X)$-module by Proposition \ref{surjiffcoad}.
\end{proof}
We can now prove Theorem \ref{intropostype} from the introduction.
\begin{theorem}
\label{sheafextn}
Let $X=\Sp A$ be a smooth affinoid $K$-variety with free tangent sheaf, $f\in A$ non-constant, $Z=\{f=0\}\subset X$. Write $j: U\to X$ for the embedding of the complement of $Z$. Let $\mathcal{N}$ be a meromorphic connection on $X$ with singularities along $Z$, and let $\mathcal{M}=E_Uj^*\mathcal{N}$ be the corresponding integrable connection on $U$.\\
Let $m_1, \dots, m_k$ be a finite generating set of $\mathcal{N}(X)$ viewed as an $A[f^{-1}]$-module, and let $b_1, \dots, b_k$ denote the corresponding $b$-functions. If all roots of $b_i$ are of positive type for each $i$, then the natural morphism
\begin{equation*}
E_X\mathcal{N}\to j_*\mathcal{M}
\end{equation*}
is an isomorphism, and $j_*\mathcal{M}$ is a coadmissible $\wideparen{\mathcal{D}}_X$-module.
\end{theorem}
\begin{proof}
By Theorem \ref{generalextn}, we know that $E_X\mathcal{N}(X)\to j_*\mathcal{M}(X)$ is an isomorphism of $\wideparen{\mathcal{D}}_X(X)$-modules. In particular, $j_*\mathcal{M}(X)$ is a coadmissible $\wideparen{\mathcal{D}}_X(X)$-module.\\
Thus the conditions in Proposition \ref{surjiffcoad} are satisfied, so that $j_*\mathcal{M}$ is a coadmissible $\wideparen{\mathcal{D}}_X$-module by Proposition \ref{localiso}. Hence \cite[Theorem 8.2]{Ardakov1} implies that the natural morphism
\begin{equation*}
E_X\mathcal{N}\to j_*\mathcal{M}
\end{equation*}
is an isomorphism, as it is an isomorphism on the level of global sections. 
\end{proof}

\section{The modules $\mathcal{M}_{\lambda}$ on the punctured unit disk}
We now discuss a particular family of examples on the punctured unit disk. This will give rise to a collection of modules $N$ for which the conditions in Proposition \ref{surjiffcoad} are not satisfied.\\
\\
Let $X= \Sp A=\Sp K\langle x\rangle$, $U=X\setminus\{0\}$, and $U_n=\Sp K\langle x, \pi^nx^{-1}\rangle$.\\
We write $\partial$ for the free generator $\frac{\mathrm{d}}{\mathrm{d}x}$ of $\mathcal{T}_X(X)$, and let $\mathcal{L}$ be the $R\langle x\rangle$-lattice generated by $\partial$.\\
\\
Fix $\lambda \in K$, and let $N_{\lambda}=A[x^{-1}]\cdot x^{\lambda}$ be equipped with the natural $D[x^{-1}]$-module structure. As before, we obtain a coadmissible $\wideparen{\mathcal{D}}_U$-module $\mathcal{M}_{\lambda}\cong \mathcal{O}_U\cdot x^{\lambda}$, and a morphism of $\wideparen{\mathcal{D}}(X)$-modules $\theta_{\lambda}: \wideparen{N_{\lambda}}\to M_{\lambda}=\mathcal{M}_{\lambda}(U)$.

\begin{proposition}
\label{coadimpliespos}
If $M_{\lambda}$ is a coadmissible $\wideparen{\mathcal{D}}(X)$-module then $\lambda$ is of positive type.
\end{proposition}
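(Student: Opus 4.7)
The plan is to argue the contrapositive: assuming $\lambda$ is of type zero, I shall exhibit an element of $M_\lambda$ that is not in the image of $\theta_\lambda$, so that Proposition \ref{surjiffcoad} implies $M_\lambda$ is not coadmissible. Since every non-negative integer is of positive type and any $\lambda$ with $|\lambda|>1$ is also of positive type, I may assume throughout that $|\lambda|\leq 1$ and $\lambda\notin \mathbb{Z}_{\geq 0}$. Under this assumption $N_\lambda$ is a cyclic $D$-module generated by $x^\lambda$ with annihilator $D(x\partial-\lambda)$, so $\wideparen{N_\lambda}\cong \wideparen{\mathcal{D}}(X)/\wideparen{\mathcal{D}}(X)(x\partial-\lambda)$.

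The key technical step is to compute a normal form on $\wideparen{N_\lambda}$. The identity $(x\partial-\lambda)\partial^{j-1}=x\partial^j-\lambda\partial^{j-1}$ and a routine induction yield the reductions $x^k\partial^j\equiv \prod_{l=j-k}^{j-1}(\lambda-l)\,\partial^{j-k}$ when $k\leq j$ and $x^k\partial^j\equiv\prod_{l=0}^{j-1}(\lambda-l)\,x^{k-j}$ when $k\geq j$, modulo the left ideal. Since every factor $(\lambda-l)$ has absolute value at most $1$, I can verify that each class in $\wideparen{N_\lambda}$ has a unique representative of the form $\sum_{m\geq 0}a_m x^m+\sum_{n\geq 1}b_n\partial^n$ with $|a_m|\to 0$ and $|b_n|^{1/n}\to 0$. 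Under $\theta_\lambda$ this representative is sent to $\sum_m a_m x^{\lambda+m}+\sum_n b_n\prod_{l<n}(\lambda-l)\,x^{\lambda-n}$ in $M_\lambda\cong \mathcal{O}(U)\cdot x^\lambda$.

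Combining the normal form with the Laurent-series description of $\mathcal{O}(U)$, the image of $\theta_\lambda$ is precisely the set of $\xi=\sum_n c_n x^{\lambda+n}\in M_\lambda$ for which the quotients $b_n:=c_{-n}/\prod_{l<n}(\lambda-l)$ satisfy $|b_n|^{1/n}\to 0$. In contrast, $\xi\in M_\lambda$ only requires $|c_{-n}|^{1/n}\to 0$. Writing $|\prod_{l<n}(\lambda-l)|=|\pi|^{\beta_n}$, the equivalence of definitions discussed after Definition \ref{defpostype} shows that $\lambda$ is of positive type if and only if $\beta_n/n$ is bounded. Hence, for $\lambda$ of type zero I can choose a subsequence $n_k\to\infty$ with $\beta_{n_k}/n_k\to\infty$ and take $\xi:=\sum_k\pi^{\lceil \beta_{n_k}/2\rceil}\,x^{\lambda-n_k}$: then $|c_{-n_k}|^{1/n_k}\leq |\pi|^{\beta_{n_k}/(2n_k)}\to 0$ gives $\xi\in M_\lambda$, but $|b_{n_k}|^{1/n_k}\geq |\pi|^{-\beta_{n_k}/(2n_k)}\to\infty$ shows $\xi$ is not in the image.

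The main obstacle is the rigorous justification of the normal form. Starting from an arbitrary $P=\sum h_{j,k}x^k\partial^j\in\wideparen{\mathcal{D}}(X)$, the reduction formally yields $a_m=\sum_{j\geq 0}h_{j,j+m}\prod_{l<j}(\lambda-l)$ and $b_n=\sum_{k\geq 0}h_{n+k,k}\prod_{l=n}^{n+k-1}(\lambda-l)$, and these series must be shown to converge with the claimed asymptotic decay. This combines the super-geometric decay of $\max_k|h_{j,k}|$ in $j$ (built into the definition of $\wideparen{\mathcal{D}}(X)$) with the uniform bound on the reduction coefficients coming from $|\lambda|\leq 1$. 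Uniqueness of the normal form follows from $\{x^m\}_{m\geq 0}\cup\{\partial^n\}_{n\geq 1}$ being a $K$-basis of $N_\lambda$.
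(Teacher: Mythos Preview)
Your argument is correct and follows the same route as the paper: both exhibit an element of $M_\lambda$ that cannot be written as $P\cdot x^\lambda$ for any $P\in\wideparen{\mathcal{D}}(X)$, with the key technical step being the reduction of $P$ to an operator with scalar coefficients (your normal form is exactly the paper's reduction to $g_j\in K$). You invoke Proposition~\ref{surjiffcoad} directly whereas the paper first re-derives cyclicity of $M_\lambda$ from coadmissibility, and your construction of the bad element via a subsequence with $\beta_{n_k}/n_k\to\infty$ is a repackaging of the paper's sequence $(i_r,j_r,\epsilon_r)$, but these are organizational rather than substantive differences.
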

\begin{proof}
Suppose $M_{\lambda}$ is a coadmissible $\wideparen{\mathcal{D}}(X)$-module and $\lambda$ is of type zero. If $\lambda$ is an integer, it is of positive type by \cite[Proposition 13.1.5]{Kedlaya}, so we have in particular that $\lambda\notin \mathbb{Z}$.\\
Replacing $\lambda$ by $\lambda-n$ for some integer $n$ does not change the property of being of type zero, and $A[x^{-1}]x^{\lambda}\cong A[x^{-1}]x^{\lambda-n}$ as $D$-modules. In this way, we can assume that $N_{\lambda}$ is generated as a $D$-module by $x^{\lambda}$.\\
Then the $\wideparen{\mathcal{D}}(X)$-submodule of $M_{\lambda}$ generated by $x^{\lambda}$ contains $N_{\lambda}$ and is thus dense with respect to the canonical $\wideparen{\mathcal{D}}(X)$-topology by Lemma \ref{equivtop}. As it is also finitely generated, the same argument as in Proposition \ref{surjiffcoad}, (iv) implies (iii), shows that $\wideparen{\mathcal{D}}(X)\cdot x^{\lambda}$ is coadmissible and hence closed in $M_{\lambda}$ by \cite[Lemma 3.6]{Schneider}. Therefore $M_{\lambda}$ is generated by $x^{\lambda}$ as a $\wideparen{\mathcal{D}}(X)$-module.\\
\\
Let $\epsilon_0=1$, $i_0=j_0=0$. We now pick inductively $\epsilon_r\in \mathbb{R}_{>0}$, $j_r, i_r\in \mathbb{N}$ as follows.\\
As $\lambda$ is not of positive type, there exists some real number $0<\epsilon_r<\epsilon_{r-1}$ such that
\begin{equation*}
\left|\frac{\pi^{2ri}}{\prod_{j=0}^{i-1}(\lambda-j)}\right|>\epsilon_r
\end{equation*}
for infinitely many natural numbers $i$. Without loss of generality, we can take $\epsilon_r$ to be of the form $|\pi|^{rj_r}$ for some natural number $j_r$, and let $i_r>\max\{j_r, i_{r-1}\}$ be a natural number satisfying the inequality above. \\
\\
Now consider the element
\begin{equation*}
m=\sum_{r=1}^{\infty} \pi^{(2i_r-j_r)r}x^{-i_r}\cdot x^{\lambda}\in M_{\lambda}=\mathcal{O}(U)x^{\lambda}.
\end{equation*}
As $|\pi|^{ni_r}|\pi|^{(2i_r-j_r)r}\leq|\pi|^{ni_r+(2i_r-i_r)r}= |\pi|^{(n+r)i_r}$ tends to $0$ for any $n\in \mathbb{Z}$ as $r$ tends to infinity, this is indeed an element of $M_{\lambda}$.\\
\\
As $M_{\lambda}$ is generated by $x^{\lambda}$ as a $\wideparen{\mathcal{D}}(X)$-module, there exist elements $g_j\in K\langle x\rangle$ such that $\sum g_j\partial^j\in \wideparen{\mathcal{D}}(X)$ and
\begin{equation*}
\sum_{j\geq 0} g_j\partial^j\cdot x^{\lambda}=m=\sum_{r\geq 1} \pi^{(2i_r-j_r)r}x^{-i_r}\cdot x^{\lambda}.
\end{equation*}
We now claim that we can assume that the $g_j$ all lie in $K$.\\
\\
Writing $g_j=\sum g_{ij}x^i\in K\langle x\rangle$, we have
\begin{align*}
\sum g_j\partial^j x^{\lambda}&= \sum g_{ij} \lambda \dots (\lambda-j+1)x^{\lambda+i-j}\\
&=\sum_{t\geq 0} \left(g_{0, t}+\sum_{j=t+1}^{\infty} g_{j-t, j} (\lambda-t)\dots(\lambda-j+1)\partial^t\right) \cdot x^{\lambda}
\end{align*}
where we write $t=j-i$ and eliminate all terms with $t<0$ by comparing coefficients with $m$.\\
We will show that $\sum h_t\partial^t\in \wideparen{\mathcal{D}}(X)$, where we have abbreviated
\begin{equation*}
h_t=g_{0, t}+\sum_{j=t+1}^{\infty} g_{j-t, j} (\lambda-t)\dots (\lambda-j+1).
\end{equation*}
First note that $|\lambda|\leq 1$ by assumption, and hence $|\lambda -a|\leq 1$ for any $a\in \mathbb{Z}$.\\
Fix $\epsilon>0$, $n\in \mathbb{N}$. As $\sum g_j\partial^j\in \wideparen{\mathcal{D}}(X)$, we know that there exists some $j'$ such that
\begin{equation*}
|g_{ij}|\cdot |\pi|^{-nj}<\epsilon \ \ \forall i, \ \forall j\geq j'.
\end{equation*}
Thus for any $t\geq 0$, $|g_{j-t, j}|<\epsilon$ for $j\geq j'$, so $h_t$ defines an element in $K$. Moreover, we have
\begin{align*}
|\pi|^{-nt}|h_t|&\leq |\pi|^{-nt}\sup \{|g_{0, t}|, |g_{j-t, j}(\lambda-t)\dots (\lambda-j+1)|: j>t\} \\
& \leq |\pi|^{-nt}\sup \{ |g_{0, t}|, |g_{j-t, j}|: j>t\}.
\end{align*}
Let $t\geq j'$, so that $|\pi|^{-nt} |g_{0, t}|< \epsilon$ and 
\begin{equation*}
|\pi|^{-nt}|g_{j-t, j}|<|\pi|^{-nj} |g_{j-t, j}|<\epsilon
\end{equation*} 
for any $j>t$.\\
Hence $|\pi|^{-nt}|h_t|<\epsilon$ for $t\geq j'$. Thus
\begin{equation*}
|\pi|^{-nt} |h_t|\to 0 \ \text{as} \ t\to \infty
\end{equation*}
for any $n$, and $\sum h_t\partial^t\in \wideparen{\mathcal{D}}(X)$.\\
Thus we can assume that $\sum g_j\partial^j x^{\lambda}=m$, where each $g_j\in K$ and $\sum g_j\partial^j\in \wideparen{\mathcal{D}}(X)$.\\
\\
But now
\begin{equation*}
\sum g_j\partial^j\cdot x^{\lambda}=\sum g_j\lambda \cdot (\lambda-1)\cdot \dots \cdot(\lambda-j+1)x^{\lambda-j},
\end{equation*}
so by comparing coefficients with $m$ we obtain
\begin{equation*}
g_{i_r}=\frac{\pi^{(2i_r-j_r)r}}{\prod_{i=0}^{i_r-1}(\lambda-i)}.
\end{equation*}
Thus
\begin{equation*}
|g_{i_r}|=\left|\frac{\pi^{2ri_r}}{\prod_{i=0}^{i_r-1}(\lambda-i)}\right|\epsilon_r^{-1}>1 
\end{equation*}
for any $r\geq 1$, by construction of the $\epsilon_r=|\pi|^{rj_r}$. But $\sum g_j\partial^j$ was supposed to give an element in $\wideparen{\mathcal{D}}(X)$, which produces the desired contradiction.
\end{proof}
This provides us with the first examples of a module not satisfying the conditions in Proposition \ref{surjiffcoad}, by taking $N=N_{\lambda}$ for $\lambda\in K$ of type zero. In particular, we have established Theorem \ref{introlambda} from the introduction. Combining this with our previous results, we also obtain Theorem \ref{intropunct}.
\begin{theorem}
\label{posiffcoad}
The following are equivalent.
\begin{enumerate}[(i)]
\item $M_{\lambda}$ is a coadmissible $\wideparen{\mathcal{D}}(X)$-module.
\item The map $\theta_{\lambda}$ is an isomorphism.
\item $\lambda$ is of positive type.
\end{enumerate}
\end{theorem}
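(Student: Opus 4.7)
The plan is to prove the three-way equivalence by establishing a cycle of implications (iii) $\Rightarrow$ (ii) $\Rightarrow$ (i) $\Rightarrow$ (iii). Two of these links are already at hand: the implication (i) $\Rightarrow$ (iii) is precisely Proposition \ref{coadimpliespos}, and (ii) $\Rightarrow$ (i) follows from Proposition \ref{surjiffcoad}, since an isomorphism is in particular surjective.

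Thus the only task left is to derive (ii) from (iii), which I would do by invoking Theorem \ref{generalextn}. Taking $m_1 = x^{\lambda}$ as a single generator of $N_{\lambda}$ over $A[x^{-1}]$, the computation $\partial \cdot x^{-s} m_1 = (\lambda - s) x^{-s-1} m_1$ identifies $b(s) = \lambda - s$, with unique root $\lambda$. To satisfy the running convention that $b(s) \neq 0$ for $s \in \mathbb{Z}_{\geq 0}$, I replace $m_1$ by $x^{-r} m_1 = x^{\lambda - r}$ for a sufficiently large integer $r$; the root of the shifted $b$-function then becomes $\lambda - r$, which by property (iii) listed after Definition \ref{defpostype} is of positive type if and only if $\lambda$ is. Under hypothesis (iii), the assumption of Theorem \ref{generalextn} is therefore satisfied, and that theorem gives at once that $\theta_{\lambda}$ is an isomorphism, which is (ii).

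I do not anticipate any substantive obstacle here: the real difficulty has already been absorbed into Proposition \ref{coadimpliespos}, whose proof constructs a delicate element of $M_{\lambda}$ to witness the failure of coadmissibility in the type-zero case, and into Theorem \ref{generalextn}, which handles the positive-type direction in full generality. All that remains in the present statement is the short bookkeeping above, which reduces to checking that after a suitable integral shift the single $b$-function in play has a root of the required type, and then reading off the three implications from the cited propositions.
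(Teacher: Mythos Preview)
Your proposal is correct and follows essentially the same route as the paper: the cycle (iii) $\Rightarrow$ (ii) via Theorem \ref{generalextn}, (ii) $\Rightarrow$ (i) via Proposition \ref{surjiffcoad}, and (i) $\Rightarrow$ (iii) via Proposition \ref{coadimpliespos}, with the single $b$-function identified from $\partial\cdot x^{-s}x^\lambda=(\lambda-s)x^{-s-1}x^\lambda$. Your explicit handling of the integral shift (and the appeal to the invariance of positive type under integer translation) makes precise what the paper encapsulates in the phrase ``as before, we can assume that $x^\lambda$ generates $N_\lambda$''.
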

\begin{proof}
As before, we can assume that $x^\lambda$ generates $N_{\lambda}$ both as a $D$-module and as an $A[x^{-1}]$-module. Then we have
\begin{equation*}
\partial x^{-s} \cdot x^{\lambda}=(\lambda-s) x^{-s-1} \cdot x^{\lambda},
\end{equation*}
so that the associated $b$-function has $\lambda$ as its unique root.\\
\\
Thus (iii) implies (ii) by Theorem \ref{generalextn}, (ii) implies (i) by Proposition \ref{surjiffcoad}, and (i) implies (iii) by Proposition \ref{coadimpliespos}.
\end{proof}

\appendix
\section{Completed tensor products}
Let $\mathcal{U}$ be a locally convex $K$-algebra, and $V_1$ (resp. $V_2$) a locally convex right (resp. left) $\mathcal{U}$-module. We denote by $\rho: V_1\otimes_K V_2\to V_1\otimes_{\mathcal{U}} V_2$ the natural surjection. If both sides are equipped with their respective projective tensor product topologies, this is a strict surjection by definition. \\
We verify that the completed tensor product satisfies the following universal property. 
\begin{lemma}
\label{ctpuniv}
Let $\theta: V_1\times V_2\to W$ be a continuous $K$-bilinear $\mathcal{U}$-balanced map into a complete locally convex $K$-vector space $W$. Then there exists a unique continuous $K$-linear map $\alpha: V_1\widehat{\otimes}_{\mathcal{U}} V_2\to W$ such that $\theta$ factors as the composition of $\alpha$ with the canonical map $V_1\times V_2\to V_1\widehat{\otimes}_{\mathcal{U}} V_2$.
\end{lemma}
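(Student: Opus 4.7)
The plan is to build $\alpha$ by working through a chain of universal properties: first factor $\theta$ algebraically through $V_1 \otimes_{\mathcal{U}} V_2$, then upgrade this factorization to a continuous one, and finally extend uniquely to the Hausdorff completion.

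First, I would appeal to the standard universal property of the algebraic tensor product of modules: since $\theta$ is $K$-bilinear and $\mathcal{U}$-balanced, it factors uniquely as $\tilde{\alpha} \circ \iota$ for a $K$-linear map $\tilde{\alpha}: V_1 \otimes_{\mathcal{U}} V_2 \to W$, where $\iota: V_1 \times V_2 \to V_1 \otimes_{\mathcal{U}} V_2$ is the canonical bilinear map. This already pins down what $\alpha$ must be on the algebraic tensor product, and the problem reduces to showing that $\tilde{\alpha}$ is continuous for the projective tensor product topology on $V_1 \otimes_{\mathcal{U}} V_2$.

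For continuity, I would invoke the analogue already established over $K$ in \cite[17.B]{SchneiderNFA}: the continuous $K$-bilinear map $\theta$ induces a continuous $K$-linear map $\hat{\theta}: V_1 \otimes_K V_2 \to W$ with respect to the projective tensor product topology over $K$. Since $\theta$ is $\mathcal{U}$-balanced, $\hat{\theta}$ vanishes on $\ker \rho$ and so descends to the same $\tilde{\alpha}$, giving $\hat{\theta} = \tilde{\alpha} \circ \rho$. But the projective tensor product topology on $V_1 \otimes_{\mathcal{U}} V_2$ is by definition the quotient topology induced by $\rho$, so continuity of $\hat{\theta}$ forces $\tilde{\alpha}$ to be continuous.

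Finally, since $W$ is complete and in particular Hausdorff, $\tilde{\alpha}$ extends uniquely to a continuous $K$-linear map $\alpha: V_1 \widehat{\otimes}_{\mathcal{U}} V_2 \to W$ by the universal property of the Hausdorff completion, and this $\alpha$ factors $\theta$ through the canonical map $V_1 \times V_2 \to V_1 \widehat{\otimes}_{\mathcal{U}} V_2$ by construction. Uniqueness of $\alpha$ is automatic: the image of $V_1 \otimes_{\mathcal{U}} V_2$ is dense in its Hausdorff completion, so any continuous linear map on $V_1 \widehat{\otimes}_{\mathcal{U}} V_2$ is determined by its restriction to this dense subspace. The only place with nontrivial content is the continuity step, which I expect to be the main thing to double-check; it is handled cleanly by delegating to Schneider's already-established universal property for $\otimes_K$ and then reading off continuity from the quotient-topology definition of $\otimes_{\mathcal{U}}$, with the rest being formal bookkeeping.
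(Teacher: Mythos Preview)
Your proof is correct and follows essentially the same route as the paper: invoke Schneider's universal property for $\otimes_K$ (the paper cites \cite[Lemma 17.1]{SchneiderNFA}), descend to $V_1\otimes_{\mathcal{U}}V_2$ using that $\rho$ is a strict surjection (equivalently, that the topology there is the quotient topology), and then extend to the Hausdorff completion by completeness of $W$. The only cosmetic difference is that you do the algebraic factorization first and then verify continuity, whereas the paper obtains the continuous map on $V_1\otimes_K V_2$ first and then descends; you also spell out uniqueness explicitly via density, which the paper leaves implicit.
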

\begin{proof}
By \cite[Lemma 17.1]{SchneiderNFA}, there exists a unique continuous $K$-linear map $\alpha': V_1\otimes_K V_2\to W$ such that $\theta$ factors through $\alpha'$, and as $\theta$ is $\mathcal{U}$-balanced, this descends to the quotient $V_1\otimes_{\mathcal{U}}V_2$. As the surjection $\rho: V_1\otimes_K V_2\to V_1\otimes_{\mathcal{U}} V_2$ is strict by definition, it follows that the induced $K$-linear map $V_1\otimes_{\mathcal{U}} V_2\to W$ is also continuous when the tensor products are equipped with the projective tensor product topology. Since $W$ is assumed to be complete, this gives the desired continuous map from the completed tensor product to $W$.
\end{proof}
\begin{lemma}
\label{ctpprops}
\begin{enumerate}[(i)]
\item The natural morphism $\mathcal{U}\widehat{\otimes}_{\mathcal{U}} V_2\to \widehat{V_2}$ is an isomorphism.
\item The natural morphism 
\begin{equation*}
V_1\widehat{\otimes}_{\mathcal{U}} V_2\to \widehat{V_1}\widehat{\otimes}_{\widehat{\mathcal{U}}} \widehat{V_2}
\end{equation*}
is an isomorphism.
\item Suppose that $V_1$ is Fr\'echet, and write $q_1\leq q_2 \leq \dots$ for a family of defining semi-norms on $V_1$. Denote the Banach completion of $V_1$ with respect to $q_n$ by $V_{1,n}$. Then the natural morphism
\begin{equation*}
V_1\widehat{\otimes}_{\mathcal{U}} V_2\to \varprojlim_n \left(V_{1,n} \widehat{\otimes}_{\mathcal{U}} V_2\right)
\end{equation*}
is an isomorphism.
\item If $V_1\cong \varprojlim V_{1, n}$ and $V_2\cong \varprojlim V_{2, n}$ are both Fr\'echet as above, the natural morphism
\begin{equation*}
V_1\widehat{\otimes}_{\mathcal{U}} V_2\to \varprojlim \left(V_{1, n}\widehat{\otimes}_{\mathcal{U}} V_{2, n}\right)
\end{equation*}
is an isomorphism.
\end{enumerate}
\end{lemma}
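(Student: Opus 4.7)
The plan is to handle the four parts in order, using Lemma \ref{ctpuniv} and standard density arguments, with (i) and (ii) establishing the formal properties and (iii)--(iv) handling the interaction with inverse limits of Fr\'echet structures.

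For (i), I would show the algebraic identity $\mathcal{U}\otimes_{\mathcal{U}} V_2 \cong V_2$ refines to a topological isomorphism before Hausdorff completion. The continuous bilinear action $\mathcal{U}\times V_2\to V_2$ is $\mathcal{U}$-balanced, so Lemma \ref{ctpuniv} produces a continuous map $\mathcal{U}\widehat{\otimes}_{\mathcal{U}} V_2\to \widehat{V_2}$; conversely, $v\mapsto 1\otimes v$ is continuous as the composition of the constant-in-first-coordinate map $V_2\to \mathcal{U}\times V_2$ with the universal continuous bilinear map into $\mathcal{U}\widehat{\otimes}_{\mathcal{U}} V_2$. The two compositions agree with the identity on the dense subspace $V_2$, hence everywhere after completion.

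For (ii), I would apply Lemma \ref{ctpuniv} in both directions. The inclusions yield a continuous $\mathcal{U}$-balanced bilinear map $V_1\times V_2\to \widehat{V_1}\widehat{\otimes}_{\widehat{\mathcal{U}}}\widehat{V_2}$, giving one direction directly. For the reverse, the continuous bilinear map $V_1\times V_2\to V_1\widehat{\otimes}_{\mathcal{U}} V_2$ extends to a continuous bilinear map on $\widehat{V_1}\times \widehat{V_2}$: continuity at $(0,0)$ produces, for every open lattice $M$ in the target, open lattices $L_i\subseteq V_i$ with $B(L_1\times L_2)\subseteq M$, which supplies the uniform control needed to extend to the completions. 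The $\widehat{\mathcal{U}}$-balancing then holds by continuity from its validity on the dense subspace $\mathcal{U}\times V_1\times V_2$, so Lemma \ref{ctpuniv} produces the inverse map.

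For (iii), the key observation is that when $V_1$ is Fr\'echet, a cofinal system of open lattices in $V_1\otimes_K V_2$ for the projective tensor product topology is given by $\{L_{1,n}\otimes_R L_2\}$, where $L_{1,n}$ is the unit ball of $q_n$ and $L_2$ runs over open lattices of $V_2$. This identifies $V_1\widehat{\otimes}_K V_2$ with $\varprojlim_n (V_{1,n}\widehat{\otimes}_K V_2)$, and quotienting by the closed $\mathcal{U}$-balancing relations commutes with the inverse limit because the kernels are compatible under the transition maps, which have dense image since $V_1$ sits densely in each $V_{1,n}$; the inverse limit of complete Hausdorff spaces is complete Hausdorff, so satisfies the universal property of $V_1\widehat{\otimes}_{\mathcal{U}} V_2$. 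Part (iv) follows by applying (iii) to $V_1$ and symmetrically to $V_2$, then diagonalising the resulting double inverse limit. The main obstacle is precisely this interchange in (iii): it must be checked that Hausdorff completion, the closed $\mathcal{U}$-quotient, and the inverse limit over $n$ all commute, rather than one being a strict refinement of the other.
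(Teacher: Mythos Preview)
Your plan for (i) and (ii) matches the paper's: both are consequences of the universal property of Lemma~\ref{ctpuniv}, and the paper says no more than that.

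For (iii), however, you take an unnecessary detour that creates exactly the obstacle you worry about at the end. You propose to first identify $V_1\widehat{\otimes}_K V_2\cong\varprojlim_n(V_{1,n}\widehat{\otimes}_K V_2)$ and then pass to the $\mathcal{U}$-quotient, which forces you to justify that Hausdorff-completing the quotient commutes with the inverse limit. The paper never separates these two steps. Instead, it uses directly that the Hausdorff completion of any locally convex space $V$ is $\varprojlim_L V/L$ over open lattices $L$, and that by definition the open lattices in $V_1\otimes_{\mathcal{U}}V_2$ are precisely the images $\rho(L_1\otimes_R L_2)$. Taking the cofinal family $L_1=\pi^i\mathcal{L}_n$ (note: you need the $\pi^i$-scalings, not just the unit balls $\mathcal{L}_n$) gives
\[
V_1\widehat{\otimes}_{\mathcal{U}}V_2\cong\varprojlim_{n}\Bigl(\varprojlim_{i,L_2}\,(V_1\otimes_{\mathcal{U}}V_2)/\rho(\pi^i\mathcal{L}_n\otimes_R L_2)\Bigr),
\]
and the inner limit is identified with $V_{1,n}\widehat{\otimes}_{\mathcal{U}}V_2$ by applying (ii) to the semi-normed space $(V_1,q_n)$. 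No exchange of quotients and limits is needed: one is simply reorganising a single inverse limit over a product index set. Part (iv) then follows by the same cofinality trick on both factors simultaneously, rather than by applying (iii) twice and diagonalising.

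So your approach is not wrong in principle, but the ``main obstacle'' you flag is self-inflicted; the paper's route is shorter and avoids it entirely.
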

\begin{proof}
The first two claims follow as usual from the universal property in Lemma \ref{ctpuniv}. For the last two claims, recall from the proof of \cite[Proposition 7.5]{SchneiderNFA} that the Hausdorff completion of any locally convex $K$-vector space $V$ can be constructed as 
\begin{equation*}
\widehat{V}\cong \varprojlim V/L,
\end{equation*}
where the limit ranges over all open lattices $L$ in $V$. Denote the unit ball of $V_1$ with respect to the semi-norm $q_n$ by $\mathcal{L}_n$. As open lattices in $V_1\otimes_{\mathcal{U}} V_2$ are given as images of $L_1\otimes_R L_2$ for $L_i$ open in $V_i$, we can take the limit over pairs $(\pi^i \mathcal{L}_n, L_2)$ where $i\in \mathbb{Z}$, $n\in \mathbb{N}$, and $L_2$ is open in $V_2$. We thus obtain
\begin{align*}
V_1\widehat{\otimes}_{\mathcal{U}} V_2&\cong \varprojlim_{L_1, L_2} \left(V_1\otimes_{\mathcal{U}} V_2/\rho(L_1\otimes_R L_2) \right)\\
& \cong \varprojlim_n \left(\varprojlim_{i\in \mathbb{Z}, L_2} V_1\otimes_{\mathcal{U}} V_2/\rho(\pi^i\mathcal{L}_n\otimes_R L_2) \right)\\
& \cong \varprojlim_n \left( V_{1, n}\widehat{\otimes}_{\mathcal{U}} V_2\right),
\end{align*} 
where the last isomorphism follows from the isomorphism (ii) above applied to the semi-normed space $(V_1, q_n)$ and the locally convex space $V_2$.\\
This establishes (iii), and the proof of (iv) is entirely analogous, noting that pairs of lattices $(\pi^i\mathcal{L}_n, \pi^j\mathcal{L}'_{n})$ form a cofinal system within the system of all pairs of open lattices, where $\mathcal{L}_n$ (resp. $\mathcal{L}'_n$) is the unit ball of $V_1$ (resp. $V_2$) with respect to the $n$th defining semi-norm. 
\end{proof}
\begin{lemma}
\label{assctp}
Let $\mathcal{U}$, $\mathcal{V}$ be two locally convex $K$-algebras. Let $V_1$ be a locally convex right $\mathcal{U}$-module, $V_2$ a locally convex $(\mathcal{U}, \mathcal{V})$-bimodule, and $V_3$ a locally convex left $\mathcal{V}$-module. Then the natural morphism
\begin{equation*}
(V_1\otimes_{\mathcal{U}}V_2)\otimes_{\mathcal{V}} V_3\to V_1\otimes_{\mathcal{U}}(V_2\otimes_{\mathcal{V}} V_3)
\end{equation*}
is an isomorphism of locally convex $K$-spaces, inducing an isomorphism
\begin{equation*}
(V_1\widehat{\otimes}_{\mathcal{U}}V_2)\widehat{\otimes}_{\mathcal{V}} V_3\cong V_1\widehat{\otimes}_{\mathcal{U}}(V_2\widehat{\otimes}_{\mathcal{V}}V_3).
\end{equation*}
\end{lemma}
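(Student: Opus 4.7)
The plan is to first establish the associativity as locally convex $K$-spaces at the level of uncompleted tensor products, and then bootstrap to the completed version by invoking Lemma \ref{ctpprops}.(ii). The standard associativity of tensor products of bimodules over rings provides a canonical $K$-linear isomorphism
\[
\alpha : (V_1\otimes_{\mathcal{U}}V_2)\otimes_{\mathcal{V}} V_3 \xrightarrow{\sim} V_1\otimes_{\mathcal{U}}(V_2\otimes_{\mathcal{V}} V_3), \qquad (v_1\otimes v_2)\otimes v_3 \longmapsto v_1\otimes(v_2\otimes v_3),
\]
which is purely algebraic and does not use any topological input.

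To verify that $\alpha$ is a homeomorphism, I would identify a common cofinal system of open lattices. Both sides are quotients of the free triple tensor product $V_1\otimes_K V_2\otimes_K V_3$, whose projective tensor product topology admits the lattices $L_1\otimes_R L_2\otimes_R L_3$ (with $L_i\subseteq V_i$ open) as a fundamental system of neighborhoods of $0$. Unwinding the nested definition of the projective tensor product topology, one checks that any open lattice of the form $L'\otimes_R L_3$ in $(V_1\otimes_{\mathcal{U}}V_2)\otimes_K V_3$ contains the image of some $L_1\otimes_R L_2\otimes_R L_3$, since $L'$, being open in $V_1\otimes_{\mathcal{U}}V_2$, contains the image of some $L_1\otimes_R L_2$. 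Hence the images of $L_1\otimes_R L_2\otimes_R L_3$ form a cofinal system of open lattices in $(V_1\otimes_{\mathcal{U}}V_2)\otimes_{\mathcal{V}} V_3$; the same holds symmetrically for $V_1\otimes_{\mathcal{U}}(V_2\otimes_{\mathcal{V}} V_3)$. Under $\alpha$ these cofinal systems correspond, so $\alpha$ is an isomorphism of locally convex $K$-spaces.

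For the completed version, I would take Hausdorff completions on both sides of $\alpha$ and use that by definition the Hausdorff completion of $(V_1\otimes_{\mathcal{U}}V_2)\otimes_{\mathcal{V}} V_3$ is $(V_1\otimes_{\mathcal{U}}V_2)\widehat{\otimes}_{\mathcal{V}} V_3$, giving
\[
(V_1\otimes_{\mathcal{U}}V_2)\widehat{\otimes}_{\mathcal{V}} V_3 \cong V_1\widehat{\otimes}_{\mathcal{U}}(V_2\otimes_{\mathcal{V}} V_3).
\]
Applying Lemma \ref{ctpprops}.(ii) to the outer tensor product on the left (with $\mathcal{V}$-modules $V_1\otimes_{\mathcal{U}}V_2$ and $V_3$), then once more with the already-complete module $V_1\widehat{\otimes}_{\mathcal{U}}V_2$ in place of $V_1\otimes_{\mathcal{U}}V_2$, identifies the left-hand side with $(V_1\widehat{\otimes}_{\mathcal{U}}V_2)\widehat{\otimes}_{\mathcal{V}} V_3$; an analogous chain on the right produces $V_1\widehat{\otimes}_{\mathcal{U}}(V_2\widehat{\otimes}_{\mathcal{V}} V_3)$. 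The main obstacle is the cofinality argument of the middle paragraph: ensuring that every open lattice in an iterated projective tensor product contains the image of a triple-lattice $L_1\otimes_R L_2\otimes_R L_3$ requires a careful unwinding of nested quotient topologies, though no new idea beyond the definitions is needed.
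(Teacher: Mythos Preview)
Your proposal is correct and follows essentially the same approach as the paper: both arguments show that the iterated tensor products $(V_1\otimes_{\mathcal{U}}V_2)\otimes_{\mathcal{V}} V_3$ and $V_1\otimes_{\mathcal{U}}(V_2\otimes_{\mathcal{V}} V_3)$ carry the quotient topology from the triple product $V_1\otimes_K V_2\otimes_K V_3$, then invoke Lemma~\ref{ctpprops}.(ii) for the completions. The paper packages the topological step as ``strict surjections are stable under $-\otimes_K V$'', whereas you unpack it directly as a cofinality check on triple lattices $L_1\otimes_R L_2\otimes_R L_3$; these are the same argument in different words.
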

\begin{proof}
First note that the associativity of $\otimes_K$ with the projective tensor product topology follows directly from the definition in terms of open lattices. Moreover, if $M_1\to M_2$ is a strict surjection, then $M_1\otimes_K V\to M_2\otimes_K V$ is a strict surjection for any locally convex $K$-spaces $M_1$, $M_2$, $V$.\\
\\ 
By definition, the surjection $V_1\otimes_K V_2\to V_1\otimes_{\mathcal{U}} V_2$ is strict, so
\begin{equation*}
(V_1\otimes_K V_2)\otimes_K V_3 \to (V_1\otimes_{\mathcal{U}} V_2)\otimes_K V_3
\end{equation*}
is a strict surjection by the above. Thus the projective tensor product topology on $(V_1\otimes_{\mathcal{U}}V_2)\otimes_{\mathcal{V}}V_3$ is equivalent to the quotient topology induced by the surjection
\begin{equation*}
(V_1\otimes_K V_2)\otimes_K V_3\to (V_1\otimes_{\mathcal{U}}V_2)\otimes_{\mathcal{V}} V_3.
\end{equation*}
The analogous statement holds for $V_1\otimes_{\mathcal{U}}(V_2\otimes_{\mathcal{V}} V_3)$.\\
Using associativity of the projective tensor product over $K$, we thus obtain that the natural bijection
\begin{equation*}
(V_1\otimes_{\mathcal{U}}V_2)\otimes_{\mathcal{V}} V_3\to V_1\otimes_{\mathcal{U}}(V_2\otimes_{\mathcal{V}} V_3)
\end{equation*}
is an isomorphism of locally convex $K$-spaces, and applying Lemma \ref{ctpprops}.(ii) yields the result for the completions.
\end{proof}

We now verify that the coadmissible tensor product $\wideparen{\otimes}$ defined in \cite[Lemma 7.3]{Ardakov1} is just $\widehat{\otimes}$. 
\begin{lemma}
Let $\mathcal{U}$ be a left Noetherian Banach algebra. Let $M$ be a finitely generated left $\mathcal{U}$-module, equipped with its canonical Banach topology, and let $W$ be any locally convex left $\mathcal{U}$-module. Then any $\mathcal{U}$-linear map $\phi: M\to W$ is continuous.
\end{lemma}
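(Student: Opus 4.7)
The plan is to reduce to the free case via a finite generating set, and then exploit the joint continuity of the $\mathcal{U}$-action on $W$. First I would pick a finite generating set $m_1,\dots,m_n$ of $M$ over $\mathcal{U}$, giving a surjection $\pi:\mathcal{U}^n\twoheadrightarrow M$, $(u_1,\dots,u_n)\mapsto \sum_i u_i m_i$. By definition (and the fact that any two such presentations give equivalent topologies, via the Open Mapping Theorem), the canonical Banach topology on $M$ is the quotient topology induced by $\pi$; for this to make sense as a Banach topology we need $\ker\pi$ to be a closed $\mathcal{U}$-submodule of $\mathcal{U}^n$, which holds because $\mathcal{U}$ is left Noetherian and finitely generated submodules of $\mathcal{U}^n$ are automatically closed in any Banach $\mathcal{U}$-module.

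With this in hand, the universal property of the quotient topology reduces continuity of $\phi:M\to W$ to continuity of the composition $\phi\circ\pi:\mathcal{U}^n\to W$. But this composition has the explicit form
\begin{equation*}
(u_1,\dots,u_n)\longmapsto \sum_{i=1}^n u_i\cdot \phi(m_i),
\end{equation*}
so it is a finite sum of $K$-linear maps $\psi_i:\mathcal{U}\to W$, $u\mapsto u\cdot \phi(m_i)$. Each $\psi_i$ is the restriction of the action map $\mathcal{U}\times W\to W$ to $\mathcal{U}\times\{\phi(m_i)\}$, which is continuous by hypothesis on $W$ as a locally convex $\mathcal{U}$-module. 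A finite sum of continuous maps into a locally convex space is continuous, so $\phi\circ\pi$, and hence $\phi$, is continuous.

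The argument therefore has essentially no computational content; the one point that deserves care — and would be the main obstacle were we working in a less favourable setting — is the preliminary assertion that the canonical Banach topology on $M$ is the quotient topology from $\mathcal{U}^n$. This is where the Noetherianity of $\mathcal{U}$ is genuinely used, to guarantee that the kernel of $\pi$ is closed so that the quotient is Hausdorff and complete, and that any alternative presentation yields an equivalent topology via the Open Mapping Theorem for Banach spaces.
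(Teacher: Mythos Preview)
Your argument is correct and is essentially the same as the paper's: both choose a finite generating set and use continuity of the $\mathcal{U}$-action on $W$ to see that each map $u\mapsto u\cdot\phi(m_i)$ is continuous. The paper simply unwinds this directly in terms of open lattices (finding $L'$ with $\mathcal{U}^\circ L'\subseteq L$ and scaling the $\phi(m_i)$ into $\mathcal{U}^\circ L'$), rather than phrasing it via the quotient topology and the universal property, but the content is identical.
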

\begin{proof}
Let $\mathcal{U}^\circ$ denote the unit ball of $\mathcal{U}$. Let $L$ be an open lattice in $W$. Since the action map $\mathcal{U}\otimes_K W\to W$ is continuous, there exists an open lattice $L'$ of $W$ such that $\mathcal{U}^\circ\otimes_R L'$ maps into $L$, and $L$ contains the lattice $\mathcal{U}^\circ L'$ that is $\mathcal{U}^\circ$-stable. \\
Let $m_1, \dots, m_r$ be a finite generating set of $M$. There exists some integer $n$ such that $\pi^n\phi(m_i)\in \mathcal{U}^\circ L'$ for each $i$, and thus 
\begin{equation*}
\sum_i \pi^n\mathcal{U}^\circ \phi(m_i)\subseteq \mathcal{U}^\circ L'\subseteq L
\end{equation*}
by $\mathcal{U}$-linearity of $\phi$. Thus $\phi$ is continuous.
\end{proof}
\begin{lemma}
\label{fgctp}
Let $\mathcal{U}=\varprojlim \mathcal{U}_n$ be a left Fr\'echet--Stein algebra and let $M$ be a left coadmissible $\mathcal{U}$-module. Then the canonical Banach topology on the finitely generated $\mathcal{U}_n$-module $M_n:=\mathcal{U}_n\otimes_{\mathcal{U}} M$ is equivalent to the projective tensor product topology.\\
In particular, $\mathcal{U}_n\otimes_{\mathcal{U}}M\cong \mathcal{U}_n\widehat{\otimes}_{\mathcal{U}}M$.
\end{lemma}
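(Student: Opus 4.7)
The plan is to produce continuous identity maps on $M_n = \mathcal{U}_n \otimes_{\mathcal{U}} M$ between its canonical Banach topology (from finite generation as a $\mathcal{U}_n$-module) and its projective tensor product topology, thereby concluding that they coincide.

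First, I would check that the projective tensor product topology makes $M_n$ into a locally convex $\mathcal{U}_n$-module. This follows because the continuous multiplication $\mathcal{U}_n \times \mathcal{U}_n \to \mathcal{U}_n$ yields a continuous $\mathcal{U}_n$-action on $\mathcal{U}_n \otimes_K M$ via the first factor, which descends continuously to the quotient $\mathcal{U}_n \otimes_{\mathcal{U}} M$ (the quotient map being strict by definition of the projective tensor product topology on the quotient). By the preceding lemma, the identity map from $M_n$ with the canonical Banach topology to $M_n$ with the projective tensor product topology is then continuous, being a $\mathcal{U}_n$-linear map from a finitely generated Banach module into a locally convex $\mathcal{U}_n$-module.

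For the reverse direction, I would invoke the universal property of the projective tensor product: continuity of the identity from the projective tensor topology to the Banach topology reduces to continuity of the bilinear map $\mathcal{U}_n \times M \to M_n$, $(u, m) \mapsto u \cdot \phi_n(m)$, where $\phi_n: M \to M_n$ is the canonical map from the coadmissible structure and $M_n$ on the right carries the Banach topology. This bilinear map factors as $\mathrm{id}_{\mathcal{U}_n} \times \phi_n$ followed by the action $\mathcal{U}_n \times M_n \to M_n$. The first is continuous because the identification $M \cong \varprojlim M_n$ as Fr\'echet spaces makes $\phi_n$ continuous by construction, and the second is continuous because $M_n$ with its canonical Banach topology is a Banach module over the Banach algebra $\mathcal{U}_n$.

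Combining the two directions, the two topologies on $M_n$ coincide. For the final claim, since the Banach topology is Hausdorff and complete, the projective tensor product topology is too, so the Hausdorff completion $\mathcal{U}_n \widehat{\otimes}_{\mathcal{U}} M$ agrees with $\mathcal{U}_n \otimes_{\mathcal{U}} M$. The main subtleties are essentially bookkeeping: verifying that the $\mathcal{U}_n$-action on the projective tensor product is continuous (so that the preceding lemma applies), and unpacking coadmissibility to see that $\phi_n: M \to M_n$ is continuous with respect to the Banach topology on the target. No substantive new ideas beyond the preceding lemma and the universal property are needed.
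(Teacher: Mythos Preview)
Your proposal is correct and follows essentially the same argument as the paper: one direction uses the universal property of the projective tensor product (via continuity of the bilinear map $\mathcal{U}_n\times M\to M_n$), and the other uses the preceding lemma applied to the finitely generated Banach module $M_n$. You spell out two points the paper leaves implicit (that the projective tensor topology makes $M_n$ a locally convex $\mathcal{U}_n$-module, and that $\phi_n\colon M\to M_n$ is continuous), but these are just routine unpackings of the same strategy.
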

\begin{proof}
Let $T_1$ denote the projective tensor product topology on $M_n$, and $T_2$ the canonical $\mathcal{U}_n$-topology.\\
The natural bijection $(M_n, T_1)\to (M_n, T_2)$ is continuous by the universal property of projective tensor products, and its inverse is continuous by the previous lemma.\\
In particular, $\mathcal{U}_n\otimes_{\mathcal{U}} M$ with the projective tensor product topology is already complete.
\end{proof}
\begin{corollary}
Let $\mathcal{U}$ and $\mathcal{V}$ be left Fr\'echet--Stein algebras. Let $M$ be a left coadmissible $\mathcal{V}$-module and let $P$ be a $\mathcal{U}$-coadmissible $(\mathcal{U}, \mathcal{V})$-bimodule as defined in \cite[Definition 7.3]{Ardakov1}. Then the coadmissible tensor product
\begin{equation*}
P\wideparen{\otimes}_{\mathcal{V}}M:=\varprojlim \left((\mathcal{U}_n\otimes_{\mathcal{U}} P)\otimes_{\mathcal{V}} M\right)
\end{equation*}
is isomorphic to the completed tensor product $P\widehat{\otimes}_{\mathcal{V}} M$.
\end{corollary}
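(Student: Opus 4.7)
The plan is to identify both sides with the single inverse limit $\varprojlim_n (P_n \widehat{\otimes}_{\mathcal{V}} M)$, where $P_n := \mathcal{U}_n \otimes_{\mathcal{U}} P$ carries its canonical Banach topology. Since $P$ is coadmissible over $\mathcal{U}$, it is a Fréchet $K$-space whose defining family of seminorms is obtained from the canonical Banach seminorms on the $P_n$. I therefore apply Lemma \ref{ctpprops}.(iii) (with $V_1 = P$ in the role of the Fréchet factor and $V_2 = M$, over the locally convex algebra $\mathcal{V}$) to obtain a natural isomorphism
\begin{equation*}
P \widehat{\otimes}_{\mathcal{V}} M \;\cong\; \varprojlim_n \bigl(P_n \widehat{\otimes}_{\mathcal{V}} M\bigr).
\end{equation*}

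The next step is to show that at each finite level no completion is needed, i.e.\ that $P_n \widehat{\otimes}_{\mathcal{V}} M \cong P_n \otimes_{\mathcal{V}} M$. By the definition of a $\mathcal{U}$-coadmissible bimodule in \cite[Definition 7.3]{Ardakov1}, the algebraic tensor product $P_n \otimes_{\mathcal{V}} M$ is a finitely generated $\mathcal{U}_n$-module; since $\mathcal{U}_n$ is a Noetherian Banach $K$-algebra, it carries a canonical Banach topology. I then mimic the proof of Lemma \ref{fgctp} to show that the canonical Banach topology and the projective tensor product topology on $P_n \otimes_{\mathcal{V}} M$ coincide: the universal property of the projective tensor product (Lemma \ref{ctpuniv}) provides a continuous $K$-linear map from the projective tensor product to the canonical Banach version once one verifies that the multiplication $P_n \times M \to P_n \otimes_{\mathcal{V}} M$ is continuous into the canonical Banach topology, and the penultimate lemma (continuity of any $\mathcal{U}_n$-linear map out of a finitely generated $\mathcal{U}_n$-Banach module into any locally convex $\mathcal{U}_n$-module) supplies continuity of the inverse bijection. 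Hence the projective tensor product topology is already complete, yielding $P_n \widehat{\otimes}_{\mathcal{V}} M \cong P_n \otimes_{\mathcal{V}} M$.

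Combining the two steps gives
\begin{equation*}
P \widehat{\otimes}_{\mathcal{V}} M \;\cong\; \varprojlim_n \bigl(P_n \otimes_{\mathcal{V}} M\bigr) \;=\; P \wideparen{\otimes}_{\mathcal{V}} M,
\end{equation*}
which is the desired isomorphism. The hard part of the argument is the continuity of the bilinear multiplication $P_n \times M \to P_n \otimes_{\mathcal{V}} M$ when the target carries the canonical Banach topology coming from finite generation over $\mathcal{U}_n$: this does not follow formally from the definitions of the two topologies, but rests on the compatibility between the right $\mathcal{V}$-action and the Banach structure on $P_n$ that is precisely what the notion of coadmissible $(\mathcal{U},\mathcal{V})$-bimodule encodes. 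Once this continuity is established, the rest is a bookkeeping exercise using the previously proved lemmas about completed tensor products.
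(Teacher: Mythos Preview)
Your overall strategy matches the paper's: first use Lemma~\ref{ctpprops}.(iii) to reduce to $\varprojlim_n (P_n\widehat{\otimes}_{\mathcal{V}} M)$, then argue that each term is already complete in the projective tensor product topology. The divergence is in how the second step is carried out, and this is exactly where you flag the ``hard part'' without resolving it.

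You want to show directly that the bilinear map $P_n\times M\to P_n\otimes_{\mathcal{V}} M$ is continuous into the canonical $\mathcal{U}_n$-Banach topology, but you only assert that this ``rests on'' the bimodule compatibility without giving an argument. The paper avoids this difficulty by inserting an intermediate factorisation through $\mathcal{V}_n$: by the definition of a $\mathcal{U}$-coadmissible $(\mathcal{U},\mathcal{V})$-bimodule, the right $\mathcal{V}$-action on $P_n$ factors through $\mathcal{V}_n$, so associativity (Lemma~\ref{assctp}) gives
\[
P_n\widehat{\otimes}_{\mathcal{V}} M\;\cong\; P_n\widehat{\otimes}_{\mathcal{V}_n}\bigl(\mathcal{V}_n\widehat{\otimes}_{\mathcal{V}} M\bigr).
\]
Now Lemma~\ref{fgctp} applies verbatim to the inner factor, so $\mathcal{V}_n\widehat{\otimes}_{\mathcal{V}} M\cong \mathcal{V}_n\otimes_{\mathcal{V}} M=:M_n$ with its canonical $\mathcal{V}_n$-Banach topology. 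One is then left with $P_n\widehat{\otimes}_{\mathcal{V}_n} M_n$, a tensor product of two finitely generated Banach modules over the Noetherian Banach algebra $\mathcal{V}_n$, and the argument of Lemma~\ref{fgctp} now goes through with no residual continuity issue: the bilinear map $P_n\times M_n\to P_n\otimes_{\mathcal{V}_n} M_n$ is bounded because both sources are Banach and the target is a finitely generated $\mathcal{U}_n$-module.

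In short, your outline is correct but incomplete at the point you yourself identify; the paper's insertion of $\mathcal{V}_n$ is precisely the device that converts your ``hard part'' into a routine Banach-module statement.
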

\begin{proof}
The coadmissible module $P$ is Fr\'echet, where a family of defining semi-norms is obtained from the isomorphism $P\cong \varprojlim (\mathcal{U}_n\otimes_{\mathcal{U}} P)$. Now apply Lemma \ref{ctpprops}.(iii) to get
\begin{align*}
P\widehat{\otimes}_{\mathcal{V}} M&\cong \varprojlim \left((\mathcal{U}_n\otimes_{\mathcal{U}} P) \widehat{\otimes}_{\mathcal{V}} M \right) \\
& \cong \varprojlim \left((\mathcal{U}_n\otimes_{\mathcal{U}} P)\widehat{\otimes}_{\mathcal{V}_n}(\mathcal{V}_n\widehat{\otimes}_{\mathcal{V}} M) \right).
\end{align*}
By Lemma \ref{fgctp}, $\mathcal{V}_n\widehat{\otimes}_{\mathcal{V}} M\cong \mathcal{V}_n\otimes_{\mathcal{V}} M$, so that
\begin{equation*}
P\widehat{\otimes}_{\mathcal{V}} M\cong \varprojlim \left((\mathcal{U}_n\otimes_{\mathcal{U}} P)\widehat{\otimes}_{\mathcal{V}_n} (\mathcal{V}_n\otimes_{\mathcal{V}} M) \right).
\end{equation*} 
Now both $\mathcal{U}_n\otimes_{\mathcal{U}}P$ and $\mathcal{V}_n\otimes_{\mathcal{V}} M$ are equipped with their canonical Banach topologies as finitely generated modules, so by the same argument as in Lemma \ref{fgctp}, their tensor product topology is equivalent to the canonical Banach topology of the finitely generated $\mathcal{U}_n$-module
\begin{equation*}
(\mathcal{U}_n\otimes_{\mathcal{U}}P)\otimes_{\mathcal{V}_n}(\mathcal{V}_n\otimes_{\mathcal{V}} M).
\end{equation*}
In particular, this is already complete, so that 
\begin{equation*}
P\widehat{\otimes}_V M\cong \varprojlim \left((\mathcal{U}_n\otimes_{\mathcal{U}}P)\otimes_{\mathcal{V}} M \right)
\end{equation*}
as required.
\end{proof}

\section{Positive type \`a la Kedlaya}
\begin{definition}[{see \cite[Definition 13.1.1]{Kedlaya}}]
\label{kedlayadef}
The type of $\lambda \in K$, denoted $\mathrm{type}(\lambda)$, is the radius of convergence of the formal power series
\begin{equation*}
\sum_{i\geq 0, i\neq \lambda} \frac{x^i}{\lambda-i}.
\end{equation*}
\end{definition}
In particular, $\mathrm{type}(\lambda)>0$ if and only if there exists some integer $r$ such that 
\begin{equation*}
\frac{\pi^{ir}}{\lambda -i}\to 0 \ \text{as} \ i\to \infty.
\end{equation*}
We now verify that $\lambda$ is of positive type as defined in Definition \ref{defpostype} if and only if $\mathrm{type}(\lambda)>0$.
\begin{lemma}
\label{kedlayaseries}
Let $\lambda \in K$ and $\lambda \notin \mathbb{Z}_{\geq 0}$. Then $\mathrm{type}(\lambda)>0$ if and only if there exists some integer $r$ such that
\begin{equation*}
\frac{\pi^{ir}}{\prod_{j=0}^{i-1}(\lambda-j)}\to 0 \ \mathrm{as} \ i\to \infty.\ \ \ \ \ \ (*)
\end{equation*}
\end{lemma}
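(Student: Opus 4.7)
The plan is to rephrase both conditions in parallel form and then compare them. A routine manipulation with $i$-th roots shows that, for a bounded sequence $(a_i)$ of positive reals, the existence of an integer $r$ with $|\pi|^{ir}/a_i\to 0$ is equivalent to the existence of constants $C,\rho>0$ with $a_i\geq C\rho^i$ for all $i$. Applied to $a_i=|\lambda-i|$ and to $a_i=|\prod_{j=0}^{i-1}(\lambda-j)|$, the lemma reduces to showing that $|\lambda-i|$ admits an exponential lower bound in $i$ if and only if the product $|\prod_{j=0}^{i-1}(\lambda-j)|$ does.

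The implication $\Leftarrow$ (product condition implies Kedlaya type positive) follows from the telescoping identity $(\lambda-i)\prod_{j=0}^{i-1}(\lambda-j)=\prod_{j=0}^{i}(\lambda-j)$. Setting $M=\max(|\lambda|,1)$, each factor satisfies $|\lambda-j|\leq M$, so $|\prod_{j=0}^{i-1}(\lambda-j)|\leq M^i$. Combining the exponential lower bound on the numerator (applying the hypothesis at $i+1$) with this exponential upper bound on the denominator yields the required exponential lower bound on $|\lambda-i|$.

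The implication $\Rightarrow$ is the main obstacle, since a priori the product can decay much faster than any single factor raised to the $i$-th power, and one must convert the pointwise bound into a bound on the accumulated valuation $\sum_{j=0}^{i-1}v(\lambda-j)$. I would first reduce to the nontrivial case: $|\lambda|\leq 1$, the residue characteristic is a prime $p>0$, and the residue $\bar\lambda$ lies in the prime field $\mathbb{F}_p$ of the residue field of $K$; in all other cases $|\lambda-j|$ is bounded below uniformly in $j$ and both conditions are trivial. Next, write $\sum_{j=0}^{i-1}v(\lambda-j)=\sum_{k\geq 1}N_k(i)$ with $N_k(i)=\#\{j\in[0,i):v(\lambda-j)\geq k\}$. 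By the ultrametric inequality, whenever $N_k(i)$ is non-zero its members lie in a single coset of $\mathbb{Z}\cap\pi^kR$ in $\mathbb{Z}$; this intersection equals $p^{\lceil k/e\rceil}\mathbb{Z}$ (with $e=v(p)$) in the discretely valued case, giving $N_k(i)\leq i\cdot p^{-k/e}+1$. The geometric series over $k$ contributes $O(i)$, while the Kedlaya hypothesis forces the number of non-empty levels $k$ to be bounded by $\max_{j<i}v(\lambda-j)=O(i)$. Together these give $v(\prod_{j=0}^{i-1}(\lambda-j))=O(i)$, and taking $s$ sufficiently large yields $|\pi|^{is}/|\prod_{j=0}^{i-1}(\lambda-j)|\to 0$; the densely valued case is handled by the same argument applied to balls of radius $|\pi|^k$ in place of cosets.
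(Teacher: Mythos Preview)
Your argument is correct, but it takes a genuinely different route from the paper. The paper's proof is a two-line application of the formal identity
\[
\sum_{i\geq 0}\frac{x^i}{\lambda(1-\lambda)\cdots(i-\lambda)}=e^x\cdot\sum_{i\geq 0}\frac{(-x)^i}{i!}\cdot\frac{1}{\lambda-i}
\]
from \cite[Lemma 13.1.6]{Kedlaya}: since $e^{\pm x}$ has positive radius of convergence, the series on the left has positive radius if and only if the series on the right does, which is precisely the desired equivalence (the two conditions are exactly positivity of the respective radii). You instead argue directly with valuations: the telescoping bound for $\Leftarrow$ is cleaner than what the paper does, while for $\Rightarrow$ you carry out a level-counting estimate on $\sum_{j<i}v(\lambda-j)$, bounding the number of $j<i$ lying in each valuation shell by an arithmetic-progression count and invoking the hypothesis only to control the number of non-empty shells. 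Your method is more elementary and self-contained (it does not appeal to the generating-function identity), and it yields an explicit linear bound $v\bigl(\prod_{j<i}(\lambda-j)\bigr)=O(i)$ with computable constants; the paper's approach is considerably shorter and avoids all the case analysis. One small remark: the boundedness hypothesis in your opening reformulation is not actually needed (and fails for the product when $|\lambda|>1$), but since you immediately reduce to $|\lambda|\leq 1$ this causes no harm.
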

\begin{proof}
By \cite[Lemma 13.1.6]{Kedlaya}, we have the following equality of formal power series:
\begin{equation*}
\sum_{i\geq 0} \frac{x^i}{\lambda(1-\lambda)\dots (i-\lambda)}=e^x\cdot \sum_{i \geq 0} \frac{(-x)^i}{i!}\frac{1}{\lambda-i}.\ \ \ \  (**)
\end{equation*} 
Suppose $\mathrm{type}(\lambda)>0$, so that we have some $r\geq 0$ such that
\begin{equation*}
\frac{\pi^{ir}}{\lambda-i}\to 0.
\end{equation*}
Then for $|x|\leq |\pi|^r\cdot |p|^{1/(p-1)}$, we have 
\begin{equation*}
\left| \frac{x^i}{i!(\lambda-i)}\right|\leq \left|\frac{\pi^{ir}}{\lambda-i}\right|\cdot \left|\frac{p^{i/(p-1)}}{i!}\right|\to 0.
\end{equation*}
So the right hand side of the equation converges for $|x|\leq |\pi|^m$, where $|\pi|^m=|\pi|^r\cdot |p|^{1/(p-1)}$. Thus the same is true for the left hand side, and hence
\begin{equation*}
\left| \frac{\pi^{im}}{\lambda \cdot (\lambda-1) \dots (\lambda-i+1)}\right|\leq \left| \frac{\pi^{(i-1)m}}{\lambda \cdot (\lambda-1)\dots (\lambda-i+1)}\right|\to 0.
\end{equation*}
Thus $\mathrm{type}(\lambda)>0$ implies that $(*)$ is satisfied.\\
\\
For the converse, first note that if $\lambda=-1$, we are done: $\mathrm{type}(-1)>0$, and $-1$ satisfies $(*)$ by the above. So from now on assume $\lambda\neq -1$, and suppose that for some integer $m$,
\begin{equation*}
\frac{\pi^{im}}{\prod_{j=0}^{i-1} (\lambda-j)}\to 0.
\end{equation*}
Write $\mu=\lambda+1$. Mutliplying the above by $1/\mu$, we know that 
\begin{equation*}
\frac{\pi^{im}}{\mu \cdot (\mu-1)\dots (\mu-i)}\to 0.
\end{equation*}
Thus the formal power series on the left hand side of $(**)$ has positive radius of convergence when $\lambda$ is replaced by $\mu$, and (as $e^{-x}$ also has positive radius of convergence) it follows that the right hand side has positive radius of convergence, giving
\begin{equation*}
\left|\frac{\pi^{ir}}{\mu-i}\right|\leq \left|\frac{\pi^{ir}}{i!(\mu-i)}\right|\to 0
\end{equation*}
for some integer $r$.\\
Thus
\begin{equation*}
\left| \frac{\pi^{ir}}{\lambda-i}\right|=\left| \frac{\pi^{-r} \cdot \pi^{(i+1)r}}{\lambda+1-(i+1)}\right| =|\pi|^{-r}\cdot \left|\frac{\pi^{(i+1)r}}{\mu-(i+1)}\right|\to 0,
\end{equation*}
and $\mathrm{type}(\lambda)>0$. 
\end{proof}


\begin{thebibliography}{12}
\bibitem{Ardakovequiv} K. Ardakov, Equivariant $\mathcal{D}$-modules on rigid analytic spaces, \emph{arXiv} 1708.07475v1 [math.RT], 2017.
\bibitem{Oberwolfach} K. Ardakov, Towards a Riemann--Hilbert correspondence for $\wideparen{\mathcal{D}}$-modules. \emph{Oberwolfach Rep.} 12 (2015), no. 2, 1406--1412.
\bibitem{Dcapthree} K. Ardakov, A. Bode, S. Wadsley, $\wideparen{\mathcal{D}}$-modules on rigid analytic spaces III: weak holonomicity and operations, \emph{arXiv} 1904.13280 [math.NT], 2019.
\bibitem{Ardakov1} K. Ardakov, S. Wadsley, $\wideparen{\mathcal{D}}$-modules on rigid analytic spaces I, \emph{J. Reine Angew. Math.}, 747 (2019), 221--275. 
\bibitem{Ardakov2} K. Ardakov, S. J. Wadsley, $\wideparen{\mathcal{D}}$-modules on rigid analytic spaces II: Kashiwara's equivalence, \emph{J. Algebraic Geometry} 27 (2018), no. 4, 647--701.
\bibitem{Bode} A. Bode, Completed tensor products and a global approach to $p$-adic analytic differential operators, \emph{Math. Proc. Camb. Phil. Soc.}, 167 (2019), no. 2, 389--416.
\bibitem{BGR} S. Bosch, U. G\"untzer, R. Remmert, \emph{Non-archimedean analysis} (Springer-Verlag, 1984).
\bibitem{Kedlaya} K. Kedlaya, \emph{$p$-adic Differential Equations}. (Cambridge University Press, 2010).
\bibitem{Mebkhout} Z. Mebkhout, L. Narvaez-Macarro, La th\'eorie du polyn\^ome de Bernstein--Sato pour les alg\`ebres de Tate et de Dwork--Monsky--Washnitzer. \emph{Ann. Sci. \'Ecole Norm. Sup.} (4) 24 (1991), no. 2, 227--256. 
\bibitem{Rinehart} G. S. Rinehart, Differential Forms on General Commutative Algebras, \emph{Trans. Amer. Math. Soc.} 108 (1963), 195--222. 
\bibitem{Schneider} P. Schneider, J. Teitelbaum, Algebras of $p$-adic distributions and admissible representations. \emph{Invent. math.} 153 (2003), 145--196.
\bibitem{SchneiderNFA} P. Schneider, \emph{Nonarchimedean functional analysis} (Springer-Verlag, 2002).
\end{thebibliography}
\end{document}